\DeclareMathOperator{\h}{H}
\newcommand{\n}{\mathfrak{n}}
\newcommand{\Z}{\mathbb{Z}}
\newcommand{\Poly}{\mathcal{S}}
\newcommand{\N}{\mathbb{N}}
\newcommand{\ov}[1]{\overline{#1}}
\newcommand{\vp}{\varphi}
\newcommand{\con}{\subseteq}
\newcommand{\del}{\partial}
\newcommand{\f}{{\bm{f}}}
\DeclareMathOperator{\z}{Z}
\newcommand{\e}{\epsilon}
\DeclareMathOperator{\Ima}{Im}
\DeclareMathOperator{\Ker}{Ker}
\DeclareMathOperator{\hh}{H}
\DeclareMathOperator{\id}{id}
\DeclareMathOperator{\ann}{ann}
\DeclareMathOperator{\Hom}{Hom}
\DeclareMathOperator{\Ext}{Ext}
\DeclareMathOperator{\Tor}{Tor}
\DeclareMathOperator{\zz}{Z}
\DeclareMathOperator{\bb}{B}
\DeclareMathOperator{\Poin}{\mathsf{P}}
\DeclareMathOperator{\Bass}{\mathsf{I}}
\DeclareMathOperator{\pp}{\mathcal{P}}
\newcommand{\xra}{\xrightarrow}
\newtheorem{theorem}{Theorem}[section]
\newtheorem*{Theorem}{Theorem}
\newtheorem{proposition}[theorem]{Proposition}
\newtheorem{lemma}[theorem]{Lemma}
\newtheorem{corollary}[theorem]{Corollary}
\theoremstyle{definition}
\newtheorem{definition}[theorem]{Definition}
\newtheorem{notation}[theorem]{Notation}
\newtheorem{chunk}[theorem]{ }
\theoremstyle{remark}
\newtheorem{remark}[theorem]{Remark}
\newtheorem*{ack}{Acknowledgements}
\newtheorem{Construction}[theorem]{Construction}
\numberwithin{equation}{section}
\begin{document}

\title[Equivariant isomorphisms of Ext and Tor modules]{Equivariant isomorphisms of Ext and Tor modules}

\author[Joshua Pollitz]{Josh Pollitz}
\address{Department of Mathematics,
University of Nebraska, Lincoln, NE 68588, U.S.A.}
\email{jpollitz@huskers.unl.edu}

\date{\today}

\keywords{Ext, Tor, Poincar\'{e} series, Bass series, complete intersections, perturbations, Koszul complexes, DG algebra, DG modules, resolutions}
\subjclass[2010]{13D07 (primary);  13D05, 16E45, 13D02, 13D40 (secondary)}

\begin{abstract}
In this article we establish equivariant isomorphisms  of Ext and Tor modules over different relative complete intersections. More precisely,  for a commutative  ring $Q$, this paper  investigates how $\Ext_{Q/(\f)}^*(M,N)$ and  $\Tor^{Q/(\f)}_*(M,N)$ change when one varies $\f$ among all Koszul-regular sequences of a fixed length such that $\f M=0$ and $\f N=0$. Of notable interest is how the theory of perturbations is used to establish isomorphisms of certain DG modules. 
\end{abstract}

\maketitle

\section{Introduction}

Fix a commutative  ring $Q$ and a pair of $Q$-modules $M$ and $N$. In this paper, we study the following problem: 
\emph{How do $\Ext_{Q/(\f)}^*(M,N)$ and $\Tor^{Q/(\f)}_*(M,N)$ change as we vary  $\f$ among all $Q$-regular sequences  of a fixed length such that $\f M=\f N=0$? }

This has been studied   when $Q$ is local with residue field $k$, $\f$ is a single $Q$-regular element and $M=k$ (see \cite{VPD}, \cite{SV}, \cite{AvI}, and \cite{Jor}).  The strongest   result in this direction is the following theorem due to  Avramov and Iyengar \cite[2.1(2)]{AvI}: 
\emph{Let $(Q,\n,k)$ be a commutative noetherian local ring and $I$ an ideal of $Q$. If $f$ and $g$ are $Q$-regular elements in $I$ such that $f-g\in \n I$,  then there is an isomorphism of graded $k$-vector spaces  $$\Tor^{Q/(f)}_*(k,N)\cong \Tor^{Q/(g)}_*(k,N)$$ for each complex of $Q/I$-modules $N$.}

One of the  main results of this article, which can be found in Theorem \ref{t2}, is the following  generalization of \cite[2.1(2)]{AvI}:
\begin{Theorem} 
Let $Q$ be a commutative  ring,   $\f=f_1,\ldots, f_n$ and $\f'=f_1'\ldots, f_n'$ be $Q$-regular sequences in an ideal $I$ of $Q$, and $M$ a $Q/I$-module. 
If $f_i-f_i'\in \ann_Q(M) I $ for each $i$, then 
for each complex of $Q/I$-modules $N$  we have isomorphisms of graded $Q/I$-modules:
\begin{enumerate}
\item $\Ext_{Q/(\f)}^*(M,N)\cong \Ext_{Q/(\f')}^*(M,N)$
\item $\Tor^{Q/(\f)}_*(M,N)\cong \Tor^{Q/(\f')}_*(M,N)$ 
\end{enumerate}
\end{Theorem}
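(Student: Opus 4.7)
My plan is to combine DG-algebra resolutions with a homological perturbation argument. Write $K = \Kos^Q(\f)$ and $K' = \Kos^Q(\f')$; both are DG $Q$-algebra resolutions of $A = Q/(\f)$ and $A' = Q/(\f')$, since $\f,\f'$ are $Q$-regular, and both share the underlying graded $Q$-algebra $\bigwedge_Q^\bullet Q^n$ generated by $y_1,\ldots,y_n$ in degree $1$, with only the differentials differing ($dy_i = f_i$ versus $dy_i = f_i'$).

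The first step is to construct a semifree DG $K$-resolution $F \xra{\simeq} M$. Since $\f M = 0$, each $f_i$ acts nullhomotopically on any $Q$-projective resolution of $M$; a coherent system of higher homotopies upgrades this to a DG $K$-module structure on $F$ with underlying graded $K$-module of the form $K \otimes_Q \bar F$ for a graded $Q$-complex $(\bar F, d_{\bar F})$. The crucial feature is that, since any $Q/I$-module $N$ has $y_i \cdot N = 0$ via the augmentation $K \surj A \surj Q/I$, all twisted terms involving positive powers of $y_i$ collapse in the induced differential on $F \otimes_K N$. This yields a natural identification $F \otimes_K N \cong \bar F \otimes_Q N$ as complexes whose differential depends only on $d_{\bar F}$ and $d_N$; the analogous reduction $\Hom_K(F,N) \cong \Hom_Q(\bar F, N)$ holds.

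The heart of the argument is to realize a semifree DG $K'$-resolution $F' \xra{\simeq} M$ built on the same graded $Q$-module $\bar F$ with the same base differential $d_{\bar F}$, by perturbing the DG $K$-module structure of $F$. The differentials of $K$ and $K'$ differ by the perturbation $\delta y_i = f_i - f_i' \in \ann_Q(M)\,I$; the $\ann_Q(M)$-factor makes this perturbation act nullhomotopically on $M$ and on the semifree data, permitting the classical perturbation lemma to produce the new DG $K'$-module structure on $K' \otimes_Q \bar F$ while preserving the quasi-isomorphism to $M$. Once this is done, the identifications $F \otimes_K N \cong \bar F \otimes_Q N \cong F' \otimes_{K'} N$ (and analogously for $\Hom$) yield the claimed isomorphisms of graded $Q/I$-modules upon taking homology. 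The main obstacle will be this perturbation step; the role of the $I$-factor is presumably to ensure that the adjustment terms land in $I$ and vanish upon tensoring (or $\Hom$-ing) with a $Q/I$-module, making the comparison a genuine equality of complexes rather than a mere quasi-isomorphism.
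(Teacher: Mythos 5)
Your two preliminary reductions are sound: since $K\to Q/(\f)$ is a quasi-isomorphism one may compute $\Ext^*_{Q/(\f)}(M,N)$ and $\Tor^{Q/(\f)}_*(M,N)$ from a semifree DG $K$-module resolution $F\xra{\simeq}M$, and because $N$ is killed by $I\supseteq(\f)$ the isomorphism $F\otimes_K N\cong \bar F\otimes_Q N$ (resp.\ $\Hom_K(F,N)\cong\Hom_Q(\bar F,N)$) holds with the differential determined only by $g_0$ and $d^N$, where $F^\natural=K^\natural\otimes_Q\bar F$ and $d^F(1\otimes p)=\sum_H \xi^H\otimes g_H(p)$. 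The gap lies in what you then assume about $(\bar F, d_{\bar F}=g_0)$: this is not a $Q$-projective resolution of $M$, and in general is not even a complex. The identity $(d^F)^2=0$ forces $g_0^2=-\sum_i f_i g_{e_i}$, not $g_0^2=0$; and even when $g_0^2=0$, $(\bar F,g_0)$ typically resolves something else. Concretely, for $Q=k[[x,y]]$, $\f=(x)$, $M=k$, the semifree DG $K$-module resolution of $k$ is the Koszul complex on $(x,y)$, and $\bar F$ is the Koszul complex on $y$ alone, which resolves $Q/(y)$, not $k$. Relatedly, the degrees don't match your claim: the $g_H$ in a DG $K$-module have degree $-1-|H|$, whereas the Shamash--Eisenbud ``higher homotopies'' have degree $2|H|-1$ and are used to build a resolution over $Q/(\f)$ (this is exactly the universal resolution $U_E(F)$ of Section 3 of the paper), not to build a DG $K$-module on $K^\natural\otimes_Q\bar F$. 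So the object you propose to perturb does not exist in the form you describe.

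Because of this, the perturbation step you flag as ``the main obstacle'' is aimed at the wrong target: one cannot ``preserve the base differential $d_{\bar F}$'' since that base is neither a complex nor freely chosen, and the classical homological perturbation lemma perturbs the differential of a complex given a deformation retract, not the DG-algebra over which a module is defined. The paper avoids all of this by not perturbing $F$ at all. It takes a single strong $\Lambda$-resolution $F$ of $M$ over $\Lambda=E\otimes_Q E'$ (so $F$ is simultaneously a DG $E$- and DG $E'$-module), and instead perturbs the DG $\pp\otimes_Q\Lambda$-module $\mathcal{D}\otimes_Q F$, which encodes the universal resolution, by $\delta=\sum_i\chi_i\otimes\lambda_i$ versus $\e=\sum_i\chi_i\otimes\lambda_i'$. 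The key isomorphism $(\mathcal{D}\otimes_Q F)^{\delta}\cong(\mathcal{D}\otimes_Q F)^{\e}$ is proved by the bespoke perturbation theorem of Section 4 (built on ``systems of higher strong homotopies'', a new gadget, not the classical perturbation lemma), combined with Lemma 5.5, where the hypothesis $f_i-f_i'\in I\ann_Q(M)$ is used to show that $1\otimes\lambda_i$ and $1\otimes\lambda_i'$ are strongly homotopic on $\mathcal{D}\otimes_Q F$. That extra machinery is precisely what replaces the divided-power arguments available only for $M=k$, and it is absent from your sketch.
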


In fact, more is shown in Theorem \ref{t2}. Loosely speaking,  the isomorphisms in (1) and (2), above, respect the cohomology operators which were first introduced by Gulliksen in \cite{G} and later studied by Avramov and  Buchweitz \cite{CD2},  Eisenbud \cite{Eis}, and many others.  As a consequence, even of the weaker result displayed above,  if $Q$ is local then the Bass series (or Poincar\'{e} series)   of the pair $(M,N)$ is the same over $Q/(\f)$ as  over $Q/(\f')$ (c.f. \ref{basspoin} and Corollary \ref{cor1} for precise statements). Hence, other homological invariants, like complexity (or Tor-complexity), are the same for the pair $(M,N)$ when computed over the ring  $Q/(\f)$ and computed over  $Q/(\f')$. 

Secondly, it is worth remarking that the  techniques developed, and used,  in this document  differ  from those used in \cite[2.1(2)]{AvI}; these techniques certainly add  value to this paper by providing a  framework to better understand  perturbations of a complex. It is the authors hope that the techniques of Section \ref{sP} will be of particular interest to researchers in various fields, not just commutative algebraist.

In this paper, we also give a   generalization of \cite[2.1(3)]{AvI}. Again, as a consequence of the next theorem below, we obtain information regarding the numerical data of (co)homology modules associated to pairs of modules over a relative complete intersection (i.e., a commutative ring modulo a regular sequence). 
\begin{Theorem}
Let $Q$ be a commutative  ring,   $\f=f_1,\ldots, f_n$ and  be a $Q$-regular sequence in an ideal $I$ of $Q$, and $M$ a $Q/I$-module.  If $(\f)\con I \ann_Q(M)$, we  have isomorphisms of graded $Q/I$-modules:
\begin{enumerate}\setcounter{enumi}{2}
\item $\Ext_{Q/(\f)}^*(M,N)\cong \pp\otimes_{Q} \Ext_Q^*(M,N) $  
\item $\Tor^{Q/(\f)}_*(M,N)\cong \pp^*\otimes_{Q}\Tor^{Q}_*(M,N)$ 
\end{enumerate}
where $\pp=Q/I[\chi_1,\ldots, \chi_n]$, each $\chi_i$ has cohomological degree two and $\pp^*$ is the graded $Q$-linear dual of $\pp$. 

\end{Theorem}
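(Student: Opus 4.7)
The plan is to build an explicit $Q/(\f)$-free resolution of $M$ via the DG perturbation theory from Section \ref{sP}, and then compute $\Ext$ and $\Tor$ directly. Let $F\xra{\simeq} M$ be a $Q$-free resolution, and set $R:=Q/(\f)$ and $K:=K^Q(\f)$. Since $(\f)\con\ann_Q(M)$, multiplication by each $f_i$ is null-homotopic on $F$, and one can inductively choose homotopies (together with higher-order coherence data) to promote $F$ to a DG $K$-module. A Tate-style enlargement then produces a $K$-semifree resolution $\widetilde{F}$ of $M$. Base-changing along the quasi-isomorphism $K\xra{\simeq} R$ yields an $R$-free resolution $G:=\widetilde{F}\otimes_K R$ whose underlying graded $R$-module has the form $\pp^*\otimes_Q F$ (more precisely, $\pp^*\otimes_{Q/I}(R\otimes_Q F)$, since $\pp^*$ is naturally a $Q/I$-module) and whose differential decomposes as $\del_G = \id_{\pp^*}\otimes\del_F + \del'$, with $\del'$ the perturbation introduced by the chosen homotopies.

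The hypothesis $(\f)\con I\ann_Q(M)$ enters decisively at the next step. By the perturbation machinery of Section \ref{sP}, one can arrange the homotopies so that every entry of $\del'$ lies in $I\cdot G$: intuitively, each factorization $f_i = \sum_j a_{ij}b_{j}$ with $a_{ij}\in I$ and $b_j\in\ann_Q(M)$ lets one write the null-homotopy for $f_i$ with an explicit coefficient in $I$. Since $N$ is a complex of $Q/I$-modules, $I\cdot N = 0$, so $\del'$ acts trivially on both $\Hom_R(G,N)$ and $G\otimes_R N$. Using the graded duality between $\pp$ and $\pp^*$ one then obtains identifications
\[
\Hom_R(G,N)\cong \pp\otimes_Q\Hom_Q(F,N),\quad G\otimes_R N\cong \pp^*\otimes_Q(F\otimes_Q N),
\]
both with differentials induced solely by $\del_F$. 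Passing to (co)homology yields the two claimed isomorphisms.

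The main obstacle is the controlled construction in the first paragraph: producing $\widetilde{F}$ so that $\del'$ actually takes values in $IG$. A naive Tate iteration will not obviously achieve this; one must track the factorization $f_i\in I\ann_Q(M)$ inductively through the choice of homotopies so that the $I$-factors remain explicit in every perturbation coefficient. This is exactly what the perturbation framework of Section \ref{sP} is engineered to provide, and it is also the ingredient responsible for the compatibility of these isomorphisms with the cohomology operators $\chi_1,\ldots,\chi_n$.
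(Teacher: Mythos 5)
The overall strategy is the right one and parallels the paper's: realize an $R$-free resolution of $M$ as a perturbation of $\mathcal{D}\otimes_Q F$ (the Avramov--Buchweitz universal resolution, after base change) and use the hypothesis $(\f)\con I\ann_Q(M)$ to strip away the perturbation before computing $\Ext$ and $\Tor$. But the key step in your argument does not survive scrutiny as stated. You claim that the homotopies can be chosen so that ``every entry of $\del'$ lies in $I\cdot G$'' where $G^\natural = \pp^*\otimes_Q F$. Since $\pp^*$ is a $Q/I$-module, $I\cdot(\pp^*\otimes_Q F)=0$; thus $\del'\in I\cdot G$ forces $\del'=0$, which would make $G$ the unperturbed complex $\mathcal{D}\otimes_Q F$. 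But $\mathcal{D}\otimes_Q F$ is not a resolution of $M$ over $Q/(\f)$: its homology is $\mathcal{D}\otimes_{Q/I}\Tor^Q_*(Q/I,M)$, not $M$. So the asserted $G$ cannot simultaneously be a resolution of $M$ and have $\del'=0$; the conditions you impose are contradictory. (A related slip: a complex with underlying module $\pp^*\otimes_Q F$ is free over $Q/I$, not over your $R=Q/(\f)$, unless $I=(\f)$.)

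What the hypothesis $(\f)\con I\ann_Q(M)$ actually buys --- and what Section~\ref{sP} actually provides --- is not a choice of homotopies that makes the perturbation vanish, but an explicit \emph{isomorphism of DG modules} $\bm{\gamma\tau}:(\mathcal{D}\otimes_Q F)^\delta\to\mathcal{D}\otimes_Q F$ that is not the identity on underlying graded modules. The mechanism: over a strong resolution $F$ one shows $\lambda_i\sim\sum_j x_{i,j}\sigma_{i,j}$ with $x_{i,j}\in I$; after tensoring with the $I$-torsion module $\mathcal{D}$, the right side dies, giving $1\otimes\lambda_i\sim 0$; Proposition~\ref{p1} upgrades this to a system of higher strong homotopies $\bm{\tau}$; and Theorem~\ref{t1} assembles $\bm{\gamma\tau}$ from $\bm{\tau}$ and the central maps $\chi_i\otimes 1$. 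So the perturbation is ``untwisted'' by an automorphism of $\mathcal{D}\otimes_Q F^\natural$, not made to vanish by a clever choice during the Tate/Shamash induction. This distinction is essential --- it is precisely why the isomorphisms end up $\pp$-equivariant, since the twist $\bm{\gamma\tau}$ is built from the $\chi_i$'s.

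Two smaller points. First, ``promote $F$ to a DG $K$-module by inductively choosing homotopies'' is not generally possible for a fixed $Q$-free resolution $F$; the paper instead fixes a semiprojective resolution of $M$ over a DG algebra $B$ large enough to carry both the Koszul variables $\xi_i$ and the auxiliary variables $\zeta_{i,j}$ (the strong $\Lambda$-resolution of~\ref{c10}), which makes all the needed multiplications honest DG-module operators rather than up-to-homotopy data. Second, the paper's proof of Theorem~\ref{t9} is a one-line reduction: set $\f'=\bm{0}$, apply Lemma~\ref{l4}, and then unravel $\Hom_R(\mathcal{D}\otimes_Q F,N)\cong\pp\otimes_R\Hom_Q(F,N)$. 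Your proposal rediscovers the shape of the final identification but replaces the rigorously verified DG isomorphism with an unproven assertion about arranging the inductive data, so it does not yet constitute a proof.
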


In what follows, we give a brief outline of this paper. In Section 2, we review  notation and tools from DG homological algebra  which will be needed to discuss the content in the rest of the article.  In Section 3, we discuss \emph{universal resolutions}. This is mostly a summary of  work done by Avramov and Buchweitz in  \cite{CD2}. Universal  resolutions allow one to equip projective resolutions of $Q/(\f)$-modules with a structure of  DG $\Poly$-module where $\Poly$ is a polynomial ring with $n$ variables of cohomological degree two. We leverage this extra structure to obtain a finer result than the theorem mentioned above (c.f. Section 5). 

Section 4 contains most of the new ideas in this article. In \cite[2.1]{AvI}, the authors exploit that the minimal free resolution of the residue field over a local ring has a system of divided powers. Since the main result in this article is for arbitrary $Q/(\f)$-modules, we  do not have access to this tool in such a general situation. This is one of the  major differences in the proof of  \cite[2.1]{AvI} and Theorems \ref{t2} and \ref{t9}.  Instead, Section 4  examines when two perturbations yield isomorphic DG modules. The main result in this section is Theorem \ref{t1}.  Finally, we apply the results in the previous sections  in Section 5.

\begin{ack} I would  like to thank Benjamin Briggs who gave a careful reading of and  comments on a preliminary draft of this article. 
I would also like to thank Luchezar Avramov and Mark  Walker for several discussions regarding this work.\end{ack}

\section{Homological Preliminaries}

Many  results in this article depend on several homological constructions. In this section we set terminology and  conventions, and list some basic properties regarding these constructions. This section can be skipped and referred to as needed. See \cite{IFR}, \cite{AFH} or \cite{FHT} as references. 

Fix a commutative  ring $Q$. Let $A=\{A_i\}_{i\in \Z}$ denote a DG $Q$-algebra. In this article, we will always assume that $A$ is graded-commutative. By a DG $A$-module, we mean a \emph{left} DG $A$-module.

\begin{chunk}Let $M$ be a complex of $Q$-modules. The differential of  $M$ is denoted by  $\del^M$. 
The \emph{boundaries} and \emph{cycles of $M$} are denoted by  $\bb(M):=\{\Ima \del^M_{i+1}\}_{i\in \Z}$ and $\z(M):=\{ \Ker\del^M_i\}_{i\in \Z}$, respectively. 
The \emph{homology of $M$} is defined to be  $$\h(M):=\z(M)/\bb(M)=\{\h_i(M)\}_{i\in \Z}$$ which is a graded $Q$-module.
\end{chunk}

\begin{chunk} Let $M$ be a DG $A$-module. 
 We let $M^\natural$ denote the underlying graded $Q$-module. Note that  $A^\natural$ is a graded $Q$-algebra and $M^\natural$ is a graded $A^\natural$-module.
 Also, $\h(M)$ is a graded $\h(A)$-module. 
\end{chunk}

\begin{chunk}We say that  
 $\alpha: M\to N$ is  a degree $d$-map from $M$ to $N$ is a family of $Q$-linear maps $\alpha=\{\alpha_i:M_i\to N_{i+d}\}_{i\in \Z}$ such that $$\alpha(am)=(-1)^{d|a|}a\alpha(m)$$ for all $a\in A$ and $m\in M$.  
\end{chunk}

\begin{chunk}
We define $\Hom_A(M,N)$ to be the DG $A$-module $\Hom_A(M,N)=\{\Hom_A(M,N)_i\}_{i\in \Z}$ determined by  \begin{align*}\Hom_A(M,N)_d:&=\{\alpha: M \to N: \alpha\text{ is a degree }d\text{ map}\}, \\ \del^{\Hom_A(M,N)}:&=\Hom(M,\del^N)-\Hom(\del^M,N),\text{ and }\\ a\cdot \alpha:&=a\alpha(-)=(-1)^{d|a|}\alpha(a\cdot -).\end{align*} We remark that $\Hom_A(M,N)$ is a subcomplex of $\Hom_Q(M,N)$. 
\end{chunk}

\begin{chunk}
Let $\alpha\in \Hom_A(M,N)_0$. We say that $\alpha$ is a morphism of DG $A$-modules if $\alpha\in \z_0(\Hom_A(M,N)).$ Equivalently,   $$\alpha(am)=a\alpha(m)$$ for all $a\in A$ and $m\in M$. 
\end{chunk}

\begin{chunk}
Let $M$ and $N$ be DG $A$-modules. We say that degree $d$ maps $\alpha$ and $\beta$ from $M$ to $N$ are homotopic, denoted $\alpha\sim \beta$, if $\alpha-\beta \in \bb_d(\Hom_A(M,N)).$  That is, there exists $\tau\in \Hom_A(M,N)_{d+1}$ such that $$\del^N\tau-(-1)^{d+1}\tau \del^N=\alpha-\beta.$$A morphism of DG $A$-modules $\alpha: M\to N$ is a homotopy equivalence if there exists a morphism of DG $A$-modules $\beta: N\to M$ such that $$\beta\alpha\sim \id^M\text{ and } \alpha\beta \sim \id^N.$$ A morphism of $A$-modules $\alpha: M\to N$ is a quasi-isomorphism if $\h(\alpha): \h(M)\to \h(N)$ is an isomorphism of graded $\h(A)$-modules. 
\end{chunk}

\begin{chunk}\label{semiproj}
A DG $A$-module $P$ is  \emph{semiprojective} if for every morphism of DG $A$-modules $\alpha: P\to N$ and each surjective quasi-isomorphism of DG $A$-modules $\gamma: M\to N$ there exists a unique up to homotopy morphism of DG $A$-module $\beta: P\to M$ such that $\alpha=\gamma\beta$. Equivalently, $P^\natural$ is a projective graded $A^\natural$-module and $\Hom_A(P,-)$ preserves quasi-isomorphisms.  \end{chunk}

\begin{chunk}\label{sres}
  A \emph{semiprojective resolution} of a DG $A$-module  $M$  is a surjective quasi-isomorphism of DG $A$-modules $\e: P\to M$ where  $P$ is a semiprojective DG $A$-module.  Semiprojective resolutions exist and any two semiprojective resolutions of $M$ are unique up to  homotopy equivalence.  \end{chunk}

%\begin{chunk}
%We define $M\otimes_A N$ to be the DG $A$-module with \begin{align*}(M\otimes_A N)^\natural:&=M\otimes_Q N/Q\{am\otimes n-(-1)^{|a||m|}m\otimes an: a\in A, m\in M, n\in N\} \\ \del^{M\otimes_A N}:&=\del^M\otimes N+M\otimes \del^N,\text{ and }\\ a\cdot m\otimes n:&=am\otimes n=(-1)^{|a||m|}m\otimes an.\end{align*} We remark that $M\otimes_A N$ is a quotient complex of $M\otimes_Q N$. 
%\end{chunk}

%  \begin{chunk} \label{ext2}
  %For DG $A$-modules $M$ and $N$, define $$\Ext_A^*(M,N):=\h(\Hom_A(P,N))\text{ and }\Tor^A_*(M,N):=\h(P\otimes_A N)$$ where $P$ is a semiprojective resolution of $M$ over $A$. Since any two semiprojective resolutions of $M$ are homotopy equivalent, $\Ext_A^*(M,N)$ and $\Tor^A_*(M,N)$ independent of choice of $M$. 
  %\end{chunk}

%%%%%%%%%%%%%%%%%%%%%%%%%%%%%%%%%%%%%%%%%%%%%%%%

%%%%%%%%%%%%%%%%%%%%%%%%%%%%%%%%%%%%%%%%%%%%%%%%

\section{Universal Resolutions}\label{ur}

Fix a commutative ring $Q$. Let $\f=f_1,\ldots, f_n$ be a list of elements in  $Q$. Let $$E:=Q\langle \xi_1,\ldots,\xi_n|\del\xi_i=f_i\rangle$$
be  the Koszul complex on $\f$ over $Q$. That is, $E$ is the DG $Q$-algebra with $E^\natural$ the exterior algebra on a free $Q$-module with basis $\xi_1,\ldots,\xi_n$ of homological degree 1, and differential $\del\xi_i=f_i$.  

Set $R:=Q/(\f)$. Via the augmentation map $E\to R$, every complex of $R$-modules is a DG $E$-module. 
Finally, let $\Poly:=R[\chi_1,\ldots, \chi_n]$ be a graded polynomial ring where each $\chi_i$ has homological degree -2. 

We will also need to refer to the graded $R$-linear dual of $\Poly$ throughout  Section \ref{ur}. Let $\Gamma$ denote the graded $R$-linear dual of $\Poly$ and  let  $\{y^{(H)}\}_{H\in \N^n}$ be the $R$-basis  of ${\Gamma}$ dual to  $\{\chi^H:=\chi_1^{h_1}\ldots \chi_n^{h_n}\}_{H\in \N^n}$  the standard $R$-basis  of $\Poly$. Then ${\Gamma}$ is a graded  $\Poly$-module via the action $$\chi_i\cdot y^{(H)}:=\left\{\begin{array}{cl} y^{(h_1,\ldots,h_{i-1},h_i-1,h_{i+1},\ldots,h_n)}  & h_i\geq 1 \\
0 & h_i=0 \end{array}\right.$$

\begin{chunk}
Let $M$ be a DG $E$-module. We let $\lambda_i^M$ denote left multiplication by $\xi_i$ on $M$, and when $M$ is clear from context we simply write $\lambda_i$. As $E$ is graded-commutative it follows that $\lambda_i\in\Hom_E(M,M)_1.$ Moreover, $\lambda_i$ is a null-homotopy for $f\id^M$. 
\end{chunk}

\begin{chunk}\label{c6}
Let $M$ be a DG $E$-module.   Define $U_E(M)$ to be the DG $\Poly\otimes_Q E$-module with $$U_E(M)^\natural \cong (\Gamma\otimes_Q M)^{\natural}$$ and differential given by the formula $$\del=1\otimes \del^{M}+\sum_{i=1}^n  \chi_i\otimes \lambda_i.$$ It is a straightforward check that $U_E(M)$ is a DG $\Poly\otimes_QE$-module. 
\end{chunk}

\begin{chunk}
We say that $\f$ is  \emph{Koszul-regular} if  the augmentation map $E\to R$ is a quasi-isomorphism. Equivalently, $$\hh_i(E)=\left\{\begin{array}{cl} R & i=0\\ 0 & i\neq 0 \end{array}\right.$$ If $\f$ is a $Q$-regular sequence, then it is Koszul-regular.  When $Q$ is local and $\f$ is contained in the maximal ideal of $Q$, the converse holds. 
\end{chunk}

\begin{chunk}\label{c5}
Assume that $\f$ is a Koszul-regular sequence and fix a complex of $R$-modules $M$.  Let $\e: F\xra{\simeq} M$ be a semiprojective resolution of $M$ over $E$.  By \cite[2.4]{CD2}, $U_E(F)\to M$ is  a semiprojective resolution over $R$ where the augmentation map is given by 
  $$a\otimes y^{(H)}\otimes x\mapsto \left\{\begin{array}{cl}  a\e(x) & |H|=0 \\ 0 & |H|>1 \end{array}\right.$$ In particular, for any complex of $R$-modules $N$ \begin{align*}\Ext_R^*(M,N)&\cong \h(\Hom_R(U_E(F),N)) \\\Tor_*^R(M,N)&\cong \h(U_E(F)\otimes_R N)\end{align*}  are graded $\Poly$-modules. 
 When we refer to $\Ext_R^*(M,N)$ and $\Tor_*^R(M,N)$ as  graded $\Poly$-modules, we are considering the $\Poly$-module structures determined by the isomorphisms above. 
 Moreover, the $\Poly$-module structure on $\Ext_R^*(M,N)$ or  $\Tor^R_*(M,N)$ is independent of choice $F$ and is natural in both $M$ and $N$ (see \cite[3.1]{CD2}) . 
\end{chunk}

%%%%%%%%%%%%%%%%%%%%%%%%%%%%%%%%%%%%%%%%%%%%%%%%%%%

%%%%%%%%%%%%%%%%%%%%%%%%%%%%%%%%%%%%%%%%%%%%%%%%%%%%%%%
\section{Perturbations}\label{sP}

Fix a DG algebra $A$ over a commutative ring $Q$.  Fix a DG $A$-module $X$. Recall that for  $\alpha,\beta \in \Hom_A(X,X)$ the commutator of $\alpha$ and $\beta$ is   $$[\alpha,\beta]:=\alpha\beta-(-1)^{|\alpha||\beta|}\beta\alpha.$$

\begin{chunk}\label{central}
Let $\gamma\in \Hom_A(X,X)$. We say that $\gamma$ is a \emph{central map} if for each $\sigma\in \Hom_A(X,X)$, $$\gamma \sigma=(-1)^{|\sigma||\gamma|}\sigma\gamma.$$ That is, $[\gamma,-]=0$ on $\Hom_A(X,X)$.  
\end{chunk}

\begin{chunk}
For $\alpha\in \Hom_A(X,X)$, $$\del^{\Hom_A(X,X)}(\alpha)=[\del^X,\alpha].$$ In particular, $[\del^X,\alpha]=0$ if and only if $\alpha \in \zz(\Hom_A(X,X)).$ 
\end{chunk}

\begin{chunk}\label{cper}
 Assume $\delta\in \zz_{-1}(\Hom_A(X,X))$ satisfies $\delta^2=0$. Define $X^\delta$ to be the perturbation of $X$ by $\delta$. That is, $X^\delta$ is the DG $A$-module where  $(X^\delta)^\natural$ is $X^\natural$ as a graded $A$-module and differential $\del^X+\delta.$
\end{chunk}

\begin{chunk}\label{c1}
For $H=(h_1,\ldots,h_n)\in \N^n$, 
we let  $$|H|:=h_1+\ldots+ h_n.$$ Also, define
$$H_i:=\left\{\begin{array}{cl} (h_1,\ldots,h_{i-1},h_i-1,h_{i+1},\ldots,h_n)  & h_i\geq 1 \\
0 & h_i=0 \end{array}\right.$$ and $$H_{i,j}:=(H_i)_j=(H_j)_i.$$\end{chunk}

\begin{chunk}\label{stronghom}
Fix $d\in \Z$. Let $\bm{\alpha}=\alpha_1,\ldots,\alpha_n$ and $\bm{\beta}=\beta_1,\ldots,\beta_n$ be sequences of elements in $\Hom_A(X,X)_d$.    We say that $\bm{\alpha}$ and $\bm{\beta}$ are
\emph{homotopic}, denoted $\bm{\alpha}\sim \bm{\beta}$, provided that $$\alpha_i\sim \beta_i$$ for each $1\leq i \leq n$. 

 We say that $\bm{\alpha}$ and $\bm{\beta}$ are
 \emph{strongly homotopic}, denoted $\bm{\alpha}\approx \bm{\beta}$, if there exists a family of maps $\bm{\tau}:=\{\tau^{(H)}\}_{H\in \N^n}$ in $\Hom_A(X,X)$ satisfying the following:
\begin{enumerate}
\item $|\tau^{(H)}|=|H|(d+1)$,
\item $\tau^{(\bm{0})}=\id^X$, and 
\item $[\del^X,\tau^{(H)}]=\sum_{i=1}^n\tau^{(H_i)}\alpha_i-\beta_i\tau^{(H_i)}$ for each $|H|>0$. 
\end{enumerate} We say that $\bm{\tau}$ is a \emph{system of higher strong homotopies from $\bm{\alpha}$ to $\bm{\beta}$}.  
\end{chunk}

\begin{remark}
Suppose  $\bm{\tau}=\{\tau^{(H)}\}_{H\in \N^n}$ is a system of higher strong homotopies from $\bm{\alpha}$ to $\bm{\beta}$.  It is immediate that   $\bm{\alpha}$ and  $\bm{\beta}$ are homotopic, as $\tau^{(\bm{e}_i)}$ is a homotopy for $\alpha_i$ and $\beta_i$ where $\bm{e}_i$ is the $n$-tuple with a 1 in the $i^{\text{th}}$ spot and 0's everywhere else. 
\end{remark}

\begin{proposition}\label{p1}
Let $X$ be a DG $A$-module and suppose  $\bm{\alpha}=\alpha_1,\ldots,\alpha_n$ and $\bm{\beta}=\beta_1,\ldots,\beta_n$  are homotopic sequences of central maps on $X$ of positive odd degree $d$. If $\h_{i}(\Hom_A(X,X))=0$ for all $i\geq d$, then $\bm{\alpha}$ and  $\bm{\beta}$ are strongly homotopic. \end{proposition}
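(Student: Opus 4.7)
The plan is to construct the family $\bm{\tau}=\{\tau^{(H)}\}_{H\in\N^n}$ inductively on $|H|$, forcing the three conditions of \ref{stronghom} layer by layer. The base case is $\tau^{(\bm{0})}=\id^X$, which satisfies (2). For $|H|=1$ with $H=\bm{e}_i$, condition (3) reduces to $[\del^X,\tau^{(\bm{e}_i)}]=\alpha_i-\beta_i$, and this is solvable by hypothesis: pick any homotopy witnessing $\alpha_i\sim\beta_i$. Note that $|\tau^{(\bm{e}_i)}|=d+1$ is even because $d$ is odd, a fact that will repeatedly clean up signs below.

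For the inductive step with $|H|\ge 2$, assume all $\tau^{(G)}$ with $|G|<|H|$ have been chosen compatibly with (1)--(3). Set
\[
\rho^{(H)}:=\sum_{i=1}^n\tau^{(H_i)}\alpha_i-\beta_i\tau^{(H_i)},
\]
which lives in $\Hom_A(X,X)$ in degree $|H|(d+1)-1\ge 2d+1\ge d$. The crux of the argument is verifying $\rho^{(H)}$ is a cycle; once this is done, the hypothesis $\h_i(\Hom_A(X,X))=0$ for $i\ge d$ furnishes a primitive $\tau^{(H)}$ of the correct degree $|H|(d+1)$, closing the induction.

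To verify $[\del^X,\rho^{(H)}]=0$, I would apply the Leibniz rule; since $|\tau^{(H_i)}|=(|H|-1)(d+1)$ is even and $|\beta_i|=d$ is odd, the signs collapse to
\[
[\del^X,\rho^{(H)}]=\sum_i\bigl([\del^X,\tau^{(H_i)}]\alpha_i+\beta_i[\del^X,\tau^{(H_i)}]\bigr)+\sum_i\bigl(\tau^{(H_i)}[\del^X,\alpha_i]-[\del^X,\beta_i]\tau^{(H_i)}\bigr).
\]
For the second summand, the graded Jacobi identity together with the centrality of $\alpha_i$ forces $[\del^X,\alpha_i]$ to be central of even degree $d-1$; since $[\del^X,\alpha_i-\beta_i]=0$ (the difference being a boundary), these middle terms reduce to strict commutators of two even-degree central elements and vanish. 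For the first summand, substitute the inductive formula $[\del^X,\tau^{(H_i)}]=\sum_j\tau^{(H_{i,j})}\alpha_j-\beta_j\tau^{(H_{i,j})}$ to obtain a double sum in $i,j$; then exploit (a) the symmetry $H_{i,j}=H_{j,i}$, and (b) the anti-commutation $\alpha_i\alpha_j+\alpha_j\alpha_i=0$ and $\beta_i\beta_j+\beta_j\beta_i=0$ (consequences of centrality combined with $d$ odd). Relabeling $i\leftrightarrow j$ in half of each double sum pairs every term with a term of opposite sign, forcing the whole expression to vanish.

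The main obstacle will be this cycle calculation: it is purely formal, but requires delicate sign bookkeeping together with the two symmetry manipulations above. Everything else is a routine induction in which the cohomological vanishing hypothesis does the heavy lifting of producing primitives.
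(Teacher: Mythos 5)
Your proposal is correct and follows the same inductive scheme as the paper: build $\tau^{(H)}$ layer by layer, verify that the putative right-hand side of \ref{stronghom}(3) is a cycle using Leibniz, centrality, the parity of $d$, and the symmetry $H_{i,j}=H_{j,i}$, then invoke the vanishing of $\h_{i}(\Hom_A(X,X))$ for $i\geq d$ to produce a primitive. The only cosmetic difference is that the paper first rewrites $\sum_i\tau^{(H_i)}\alpha_i-\beta_i\tau^{(H_i)}$ as $\sum_i\tau^{(H_i)}(\alpha_i-\beta_i)$ (using that $\beta_i$ is central and $\tau^{(H_i)}$ has even degree), which streamlines the sign bookkeeping and avoids your separate Jacobi-identity observation that $[\del^X,\alpha_i]$ is central, but the two calculations are substantively identical.
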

\begin{proof}
We set $\tau^{(\bm{0})}:=\id^X$ and construct the rest by induction. Suppose $|H|>0$ and assume we've constructed $\{\tau^{(H')}: |H'|<|H|\}$ satisfying (1)-(3).   
For each $i$, notice that $$[\del^X, \tau^{(H_i)}(\alpha_i-\beta_i)]=[\del^X, \tau^{(H_i)}](\alpha_i-\beta_i)+\tau^{(H_i)}[\del^X,\alpha_i-\beta_i].$$  As $\alpha_i\sim\beta_i$, it follows that $[\del^X,\alpha_i-\beta_i]=0$ and hence,  $$\tau^{(H_i)}[\del^X,\alpha_i-\beta_i]=0.$$ Moreover, \begin{align*} [\del^X, \tau^{(H_i)}](\alpha_i-\beta_i)&=\left(\sum_{j=1}^n \tau^{(H_{i,j})}\alpha_j-\beta_j \tau^{(H_{i,j})}\right)(\alpha_i-\beta_i) \\
&=\sum_{j=1}^n \tau^{(H_{i,j})}(\alpha_j-\beta_j )(\alpha_i-\beta_i) 
\end{align*}
Now using that  $\alpha_i$ and $\beta_i$ are central maps of the same odd degree it follows that $$\sum_{i=1}^n\sum_{j= 1}^n \tau^{(H_{i,j})}(\alpha_j-\beta_j )(\alpha_i-\beta_i)=0.$$ Thus, $$\left[\del^X,\sum_{i=1}^n\tau^{(H_i)}(\alpha_i-\beta_i)\right]=0.$$ and hence, 
$$\sum_{i=1}^n\tau^{(H_i)}(\alpha_i-\beta_i)\in \zz(\Hom_A(X,X)).$$  As  $\h_{i}(\Hom_A(X,X))=0$ for all $i\geq d$ and $$\left|\sum_{i=1}^n\tau^{(H_i)}(\alpha_i-\beta_i)\right|=|H_i|(d+1)+d\geq d,$$ it follows that  there exists  a map $\tau^{(H)}$ in $\Hom_{A}(X,X)$ of degree $$|H_i|(d+1)+d+1=|H|(d+1)$$ satisfying $$[\del^X,\tau^{(H)}]=\sum_{i=1}^n\tau^{(H_i)}(\alpha_i-\beta_i).$$ Finally, as $\beta_i$ is central for each $i$, condition (3) in \ref{stronghom} is satisfied. Thus, by induction we are done.  
\end{proof}

\begin{definition}\label{pss}
Let  $\bm{\alpha}=\alpha_1,\ldots,\alpha_n$ and $\bm{\beta}=\beta_1,\ldots,\beta_n$ be sequences of  maps of positive odd degree $d$ on $X$, 
$\bm{\gamma}=\gamma_1,\ldots, \gamma_n$ is a sequence of centrals maps on $X$  of degree $|\gamma_i|=-d-1$, and set $$\delta:=\sum_{i=1}^n\gamma_i\alpha_i  \ \text{ and } \ \e:=\sum_{i=1}^n\gamma_i \beta_i.$$ We say that $(\bm{\alpha},\bm{\beta}, \bm{\gamma},\bm{\tau})$ is a \emph{perturbing system on X} if the following hold 
\begin{enumerate}
\item $\bm{\tau}:=\{\tau^{(H)}\}_{H\in \N^n}$  is a system of higher strong homotopies from  $\bm{\alpha}$ to  $\bm{\beta}$,
\item $\delta,\e\in \zz_{-1}(\Hom_A(X,X))$ 
\item $\delta^2=0$ and $\e^2=0$
\item For each $x\in X$, $\gamma^H(x)=0$ for all $|H|\gg 0$ where $\gamma^{H}=\gamma_1^{h_1}\ldots\gamma_n^{h_n}$\end{enumerate}
\end{definition}

\begin{Construction}\label{const1}Let  $(\bm{\alpha},\bm{\beta}, \bm{\gamma},\bm{\tau})$ be a perturbing system on $X$. 
Using  conditions (2) and (3) from Definition \ref{pss}, $X^{\delta}$ and $X^\e$ are  well-defined DG $A$-modules (see \ref{cper}). Define  $\bm{\gamma\tau}: X^\delta \to X^\e$ by $$x\mapsto \sum_{H\in \N^n} \gamma^H\tau^{(H)}(x).$$ As each  $\gamma_i$ is a central map, $$\gamma^H\tau^{(H)}(x)=\tau^{(H)}\gamma^H(x)$$ for each $H\in \N^n$.  Hence, condition (4) implies that $$ \sum_{H\in \N^n} \gamma^H\tau^{(H)}(x)$$ is a finite sum. Moreover, $\bm{\gamma\tau}$ is $A$-linear. Since $|\gamma_i|=-d-1$ it follows that $$|\gamma^H\tau^{(H)}|=|\gamma^H|+|\tau^{(H)}|=|H|(-d-1)+H(d+1)=0$$  and so  $\bm{\gamma\tau}$ is a degree 0 map in $\Hom_A(X,X)$. 

Finally, we claim that $\bm{\gamma\tau}$ is a morphism of DG $A$-modules. The proof is given with the following string of equalities.  Adopting the convention  $\gamma^{\bm{0}}=\id^X$, the fifth equality below follows from \ref{stronghom}(3)
\begin{align*}(\del^X+\e)\bm{\gamma\tau}&=(\del^X+\e)\sum_{H\in \N^n} \gamma^H\tau^{(H)}\\
&=\sum_{H\in \N^n} \gamma^H\del^X\tau^{(H)}+\sum_{i=1}^n\sum_{H\in \N^n}\gamma_i\gamma^{H}\beta_i\tau^{(H)}\\
&=\del^X+\sum_{|H|>0} \gamma^H\del^X\tau^{(H)}+\sum_{|H|>0}\sum_{i=1}^n\gamma^{H}\beta_i\tau^{(H_i)}\\
&=\del^X+\sum_{|H|>0} \left(\gamma^H\del^X\tau^{(H)}+\sum_{i=1}^n\gamma^H\beta_i\tau^{(H_i)}\right)\\
&=\del^X+\sum_{|H|>0} \left(\gamma^H\tau^{(H)}\del^X+\sum_{i=1}^n\gamma^H\tau^{(H_i)}\alpha_i\right)\\
&=\del^X+\sum_{|H|>0} \gamma^H\tau^{(H)}\del^X+\sum_{H\in \N^n}\sum_{i=1}^n\gamma_i\gamma^{H}\tau^{(H)}\alpha_i\\
&=\sum_{H\in \N^n} \gamma^H\tau^{(H)}\del^X+\sum_{H\in \N^n}\gamma^{H}\tau^{(H)}\sum_{i=1}^n\gamma_i\alpha_i\\
&=\bm{\gamma\tau}(\del^X+\delta).
\end{align*} Thus, $\bm{\gamma\tau}$ is a morphism of DG $A$-modules. 
\end{Construction}

\begin{theorem}\label{t1}
Let $X$ be a DG $A$-module and suppose $(\bm{\alpha},\bm{\beta}, \bm{\gamma},\bm{\tau})$ is a perturbing system on $X$. Using the notation from Construction \ref{const1}, the morphism of DG $A$-modules $\bm{\gamma\tau}: X^{\delta}\to X^{\e}$  is an isomorphism.   \end{theorem}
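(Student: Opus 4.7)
The plan is to exhibit a two-sided graded $A$-linear inverse of $\bm{\gamma\tau}$ via a locally nilpotent geometric-series construction. Since Construction \ref{const1} has already shown $\bm{\gamma\tau}$ is a morphism of DG $A$-modules, and the underlying graded $A$-modules $(X^\delta)^\natural$ and $(X^\e)^\natural$ both equal $X^\natural$, any graded $A$-linear inverse is automatically a morphism of DG $A$-modules; it therefore suffices to prove $\bm{\gamma\tau}$ is bijective.

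Write $\bm{\gamma\tau} = \id^X + \phi$, where $\phi := \sum_{|H| > 0} \gamma^H \tau^{(H)}$ (using $\tau^{(\bm{0})} = \id^X$). Because each $\gamma_i$ is central, $\gamma^H \tau^{(H)}(x) = \tau^{(H)} \gamma^H(x)$, and condition (4) of Definition \ref{pss} forces $\gamma^H(x) = 0$ once $|H|$ exceeds some threshold $N_x$. So the sum defining $\phi(x)$ is finite, and $\phi$ is a well-defined graded $A$-linear endomorphism of $X^\natural$.

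The key step, and the main obstacle, is showing $\phi$ is locally nilpotent. Since $d$ is odd, $(d+1)^2$ is even, so centrality of $\gamma_i$ (together with $|\gamma_i| = -d-1$ and $|\tau^{(H')}| = |H'|(d+1)$) yields $\gamma^H \tau^{(H')} = \tau^{(H')} \gamma^H$ with no sign. Iterating $\phi$ and commuting all $\gamma$-factors past all $\tau$-factors gives
\begin{align*}
\phi^k(x) = \sum_{|H_1|, \ldots, |H_k| > 0} \tau^{(H_k)} \tau^{(H_{k-1})} \cdots \tau^{(H_1)} \gamma^{H_1 + \cdots + H_k}(x).
\end{align*}
Since $|H_1 + \cdots + H_k| \geq k$, condition (4) forces every summand on the right to vanish once $k \geq N_x$, so $\phi^k(x) = 0$ for large $k$.

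Consequently $\psi := \sum_{k \geq 0} (-1)^k \phi^k$ is well-defined pointwise, is degree $0$ and graded $A$-linear on $X^\natural$, and a telescoping computation gives $(\id^X + \phi)\psi = \psi(\id^X + \phi) = \id^X$. Thus $\bm{\gamma\tau}$ is bijective, and combined with Construction \ref{const1}, it is an isomorphism of DG $A$-modules. The only delicate point is the sign and centrality bookkeeping that collects all $\gamma$-factors to the right inside $\phi^k(x)$, so that condition (4) can be invoked to dominate the formal series; once local nilpotency is in hand, the usual geometric series closes the argument.
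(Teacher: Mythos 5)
Your proof is correct, and it takes a genuinely different route from the paper's. The paper filters $X^\delta$ by the DG $A$-submodules $X(n) = \{x : \gamma^H(x) = 0 \text{ for } |H| > n\}$, observes $\bm{\gamma\tau}$ restricts to maps $X(n) \to Y(n)$ that induce the identity on the associated graded pieces $X(n)/X(n-1)$, deduces by induction that each $\bm{\gamma\tau}(n)$ is an isomorphism, and then passes to the colimit using condition (4). You instead write $\bm{\gamma\tau} = \id^X + \phi$, prove $\phi$ is locally nilpotent by the explicit computation
\[
\phi^k(x) = \sum_{|H_1|,\ldots,|H_k|>0} \tau^{(H_k)}\cdots\tau^{(H_1)}\,\gamma^{H_1+\cdots+H_k}(x),
\]
and produce the inverse $\psi = \sum_{k\ge 0}(-1)^k\phi^k$ directly. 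The sign bookkeeping you worry about is indeed harmless: since $d$ is odd, $|\gamma_i| = -d-1$ is even, so by \ref{central} each $\gamma^{H}$ commutes on the nose with every $\tau^{(H')}$, and the $\gamma_i$ commute among themselves; this is exactly what collapses the nested sum into the displayed form. You also correctly note that a bijective morphism of DG $A$-modules whose underlying graded modules coincide has a DG $A$-linear inverse automatically, so only bijectivity needs checking. The two arguments encode the same underlying insight — $\phi$ strictly lowers the filtration by $\gamma$-weight — but yours is constructive (you hand over the inverse), whereas the paper's is structural (iso on the associated graded plus a colimit). Either is a complete proof; the filtration version is a bit more flexible if one later wants to track compatibility with other filtered structures, while the geometric-series version is shorter and makes the mechanism of the proof more transparent.
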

\begin{proof}
For $n\in \N$, define $$X(n):=\{x\in X^{\delta}: \gamma^H(x)=0\text{ for all }|H|>n\}.$$  Since each $\gamma_i$ is central, it follows that $X(n)$ is a DG $A$-submodule of $X^\delta$. Indeed, for $x\in X(n)$ and $|H|>n$ we have that $$\gamma^H(\del^X+\delta)(x)=(\del^X+\delta)\gamma^H(x)=0.$$  Similarly, $Y(n)$ is a DG $A$-submodule of $X^{\e}$ where $Y(n)^\natural=X(n)^\natural$. Again using that $\gamma_i$ is central for each $i$,  for each $H\in \N^n$ it follows that $$\gamma^H\circ\bm{\gamma\tau}=\bm{\gamma\tau}\circ \gamma^H.$$ Thus, $\bm{\gamma\tau}(X(n))\con Y(n).$ Set $\bm{\gamma\tau}(n):=\bm{\gamma\tau}|_{X(n)}: X(n)\to Y(n)$. 

We have a chain of DG $A$-submodules $$0=X(0)\con X(1)\con X(2) \ldots \ \  \text{ and } \ \ 0=Y(0)\con Y(1)\con Y(2) \ldots $$  of $ X^{\delta}$ and $X^{\e}$, respectively. Moreover, by assuming condition (3) it follows that \begin{equation}\label{eqs}\varinjlim X(n)=X^{\delta}, \ \varinjlim Y(n)=X^{\e}, \  \text{ and } \  \varinjlim \bm{\gamma\tau}(n)=\bm{\gamma\tau}.\end{equation} Finally, it is clear that $\bm{\gamma\tau}(0)$ is an isomorphism  and that $\bm{\gamma\tau}(n)$ induces an isomorphism $$X(n)/X(n-1)\to Y(n)/Y(n-1)$$ for each $n\in \N$. Hence, by induction $\bm{\gamma\tau}(n)$ is an isomorphism for each $n \in \N$.  Thus, by (\ref{eqs}) it follows that $\bm{\gamma\tau}$ is an isomorphism. 
\end{proof}

%%%%%%%%%%%%%%%%%%%%%%%%%%%%%%%%%%%%%%%%%%%%%%%%

%%%%%%%%%%%%%%%%%%%%%%%%%%%%%%%%%%%%%%%%%%%%%%%%

\section{Equivariant Isomorphisms}

\begin{chunk}\label{equivariant}
Let $\vp: A\to A'$ be a morphism of DG $Q$-algebras. Suppose $M$ is a DG $A$-module and $M'$ is a DG $A'$-module. A morphism of complexes $\psi: M\to M'$ is $\vp$-\emph{equivariant} if $$\psi(am)=\vp(a)\psi(m)$$ for all $a\in A$ and $m\in M$. 
Similarly, a morphism of complexes $\psi': M'\to M$ is $\vp$-\emph{equivariant} if $$\psi'(\vp(a)m')=a\psi'(m')$$ for all $a\in A$ and $m'\in M$.

Let $\psi$ be a $\vp$-equivariant map. Then $\psi$ is a morphism of DG $A$-modules when $M'$ is regarded as an $A$-module via restriction of scalars along $\vp$.
 If $\psi$ is an isomorphism of complexes, we say that $\psi$ is a $\vp$-\emph{equivariant isomorphism}. 
\end{chunk}

  First, we fix some notation which will be used throughout the  section.

\begin{notation}\label{n1}
 Let  $Q$ be a commutative ring and fix two lists of elements    $\f=f_1,\ldots, f_n$ and $\f'=f_1'\ldots, f_n'$ in $Q$. Set 
 \begin{align*}
 E&:= Q\langle \xi_1,\ldots,\xi_n|\del\xi_i=f_i\rangle  &  &E':=Q\langle \xi_1',\ldots, \xi_n'|\del \xi_i'=f_i'\rangle \\
  \Poly&:=Q/(\f)[\chi_1,\ldots,\chi_n]   & &\Poly':=Q/(\f')[\chi_1',\ldots,\chi_n'] \\
   \Gamma&:=\Hom_{Q/(\f)}(\Poly,Q/(\f))    & &  \Gamma':=\Hom_{Q/(\f')}(\Poly',Q/(\f')) 
 \end{align*}
 where each  $\chi_i$ and $\chi'_i$ have homological  degree $-2$. Also, let   $\Lambda:=E\otimes_QE'$.  For a DG $\Lambda$-module $X$ we let $\lambda_i$ and $\lambda_i'$ denote left multiplication  on $X$  by $\xi_i$ and $\xi_i'$, respectively. 
 
Suppose that   $I$ is an ideal of $Q$ containing $(\f,\f')$,  set $R=Q/I$ and 
  \begin{align*}
  \pp&:=R\otimes_{Q}\Poly=R[\chi_1,\ldots,\chi_n]   & &\pp':=R\otimes_Q\Poly' =R[\chi_1',\ldots,\chi_n'] \\
   \mathcal{D}&:=\Hom_R(\pp,R)   & &  \mathcal{D}':=\Hom_{R}(\pp',R) 
 \end{align*}
  \end{notation}
 
% $$E:=Q\langle \xi_1,\ldots,\xi_n|\del\xi_i=f_i\rangle$$ and   $\ov{Q}:=Q/(\f)$. For a DG $E$-module $X$, we let $\lambda_i^X$ denote left multiplication by $\xi_i$ and we simply write $\lambda_i$ when $X$ is clear from context. Finally, let $\Poly=\ov{Q}[\chi_1,\ldots,\chi_n]$ be  a polynomial ring where each $\chi_i$ has  degree $-2$, $\Gamma$ is the $\ov{Q}$-linear dual of $\Poly$. 
 
% We similarly define $E'$, $\ov{Q}'$, $\Poly'$, $\lambda_i'$, and $\Gamma'$. 
 
% Finally, $\Lambda:=E\otimes_QE'$. Suppose that   $I$ is an ideal of $Q$ containing $(\f,\f')$ and set $R=Q/I$. Let $\pp=R[\chi_1,\ldots,\chi_n]$ and $\pp'=R[\chi_1',\ldots, \chi_n']$ be   polynomial rings where each $\chi_i$ and $\chi_i'$ have  degree $-2$ and $\mathcal{D}$ and $\mathcal{D}'$  is the $R$-linear dual of $\pp$ and $\pp'$, respectively. 

\begin{remark}
\label{r1}
Set $\ov{Q}:=Q/(\f)$ and $\ov{Q}':=Q/(\f')$ and assume that $\f$ and $\f'$ are Koszul-regular sequences. For  complexes of $R$-modules $M$ and $N$, $$\Ext_{\ov{Q}}^*(M,N)=\h(\Hom_{\ov{Q}}(U_{E}(F), N))$$ is a graded module over $\Poly$ where $F$ is a semiprojective resolution of $M$ over $E$ (see \ref{c5}). As $I$ annihilates $\Ext_{\ov{Q}}^*(M,N)$, $\Ext_{\ov{Q}}^*(M,N)$ is a graded $\pp$-module. 
Moreover,  since $N$ is an $R$-module, using adjunction we have the following isomorphism of graded $\pp$-modules \begin{equation}\Ext_{\ov{Q}}^*(M,N)\cong \h(\Hom_R(R\otimes_{\ov{Q}} U_{E}(F),N)). \label{iso1}
\end{equation}
Similarly, there is an isomorphism of graded $\pp'$-modules 
\begin{equation}\Ext_{\ov{Q}'}^*(M,N)\cong \h(\Hom_R(R\otimes_{\ov{Q}'} U_{E'}(F'),N)) \label{iso2}
\end{equation} where $F'$ is a semiprojective resolution of $M$ over $E'$. 
There are analogous statements for  $\Tor_*^{\ov{Q}}(M,N)$ and $\Tor^{\ov{Q}'}(M,N)$ as graded $\pp$- and $\pp'$-modules, respectively. 
\end{remark}

In this section, first,  we will be   comparing $\Ext_{\ov{Q}}^*(M,N)$ with  $\Ext_{\ov{Q}'}^*(M,N)$ and $\Tor^{\ov{Q}}(M,N)$ with $\Tor^{\ov{Q}'}(M,N)$ for   $R$-modules $M$ and $N$ when $\f$ and $\f'$ are assumed to be Koszul-regular sequences. Roughly speaking,  if each $\lambda_i$  and $\lambda_i'$ act similarly on a semiprojective resolution of $M$ over $\Lambda$, then we have isomorphisms between the corresponding Ext and Tor modules. Later in the section, we will be interested in the case when $\f$ is Koszul-regular and $\f'$ is the zero sequence.

\begin{lemma}\label{l3}
 For any DG $E$-module $Y$ and DG $E'$-module $Y'$, $$U_E(Y)= (\Gamma\otimes_{Q} Y )^\delta \ \text{ and } \ U_{E'}(Y')= (\Gamma'\otimes_{Q} Y' )^{\delta'}$$ where     $\delta:=\sum_{i=1}^n \chi_i\otimes  \lambda_i$ and   $\delta':=\sum_{i=1}^n \chi_i'\otimes  \lambda_i'. $ 
\end{lemma}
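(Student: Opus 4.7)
The plan is to prove the lemma by directly unwinding the construction of $U_E(Y)$ from \ref{c6} and comparing it to a perturbation in the sense of \ref{cper}. Both $U_E(Y)$ and $(\Gamma \otimes_Q Y)^\delta$ have the same underlying graded module $(\Gamma \otimes_Q Y)^\natural$, so the only content is to match the two differentials and to verify that $\delta$ is a legitimate perturbing datum, i.e., that $\delta$ is a degree $-1$ cycle in the endomorphism complex satisfying $\delta^2 = 0$.

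First I would view $\Gamma \otimes_Q Y$ as a DG module over $\Poly \otimes_Q E$ by letting $\Gamma$ carry the zero differential, so that its base differential is simply $1 \otimes \del^Y$. Then the perturbed differential on $(\Gamma \otimes_Q Y)^\delta$ is $1 \otimes \del^Y + \delta$, which is exactly the differential on $U_E(Y)$ declared in \ref{c6}. Thus everything reduces to verifying the hypotheses of \ref{cper} for $\delta$; the $E'$ statement is identical.

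For the degree, since $|\chi_i| = -2$ and $|\lambda_i| = 1$ we have $|\delta| = -1$. For the cycle condition, I would recall that $\lambda_i$ is a null-homotopy for multiplication by $f_i$, so $[\del^Y, \lambda_i] = f_i \id^Y$. A direct computation with Koszul signs then yields
$$[1 \otimes \del^Y,\; \chi_i \otimes \lambda_i] \;=\; \chi_i \otimes [\del^Y, \lambda_i] \;=\; \chi_i \otimes f_i \id^Y,$$
which vanishes on $\Gamma \otimes_Q Y$ because $\Gamma$ is a $Q/(\f)$-module, so $f_i \chi_i \cdot \gamma = \chi_i \cdot (f_i \gamma) = 0$ for every $\gamma \in \Gamma$. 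Finally, $\delta^2 = 0$ follows by pairing terms in the double sum $\sum_{i,j}\chi_i\chi_j \otimes \lambda_i\lambda_j$: the $\chi_i$ commute in $\Poly$ since they live in even degree, while $\lambda_i \lambda_j = -\lambda_j \lambda_i$ and $\lambda_i^2 = 0$ because $E$ is the exterior algebra on the $\xi_i$, so the $(i,j)$ and $(j,i)$ summands cancel and the diagonal is killed.

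The only real obstacle here is careful bookkeeping of Koszul signs when expanding $[1 \otimes \del^Y, \delta]$ and $\delta^2$, but this is routine once the sign conventions are fixed.
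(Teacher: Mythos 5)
Your proof is correct and follows essentially the same route as the paper, which simply observes that the underlying graded modules and differentials coincide once one unwinds \ref{c6} and \ref{cper}. The extra verification that $\delta$ is a degree $-1$ cycle with $\delta^2=0$ is a reasonable elaboration and is in fact exactly the ``straightforward check'' the paper already alludes to in \ref{c6} when asserting that $U_E(M)$ is a DG $\Poly\otimes_Q E$-module.
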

\begin{proof}
This follows immediately by examining the underlying graded modules and the differentials (see \ref{c6} and \ref{cper}). \end{proof}

\begin{chunk}\label{c10}
Let $M$ be an $R$-module. Suppose that $f_i-f_i'\in I \ann_Q(M)$ for each $i$, that is, $$f_i-f_i'=\sum_j x_{i,j}g_{i,j}$$ where $x_{i,j}\in I$ and $g_{i,j}\in \ann_Q(M)$.  We set $B:=\Lambda\langle \zeta_{i,j}|\del \zeta_{i,j}=g_{i,j}\rangle$ and as $f_i,f_i', g_{i,j}\in \ann_Q(M)$ it follows that $M$ is a DG $B$-module. 
A bounded below semiprojective resolution of $M$ over $B$ is called a \emph{strong $\Lambda$-resolution of $M$}. 
Since $B$ is semiprojective over $\Lambda$,  a strong $\Lambda$-resolution of $M$ is a semiprojective resolution of $M$ over $\Lambda$. 

By \ref{semiproj}, strong $\Lambda$-resolutions of $M$ exist. Moreover, any two strong $\Lambda$-resolutions of $M$ are homotopy equivalent DG $\Lambda$-modules (c.f. \ref{sres}). 
\end{chunk}

 Lemma \ref{l4} was inspired by, and generalizes, Claim 2 in the proof of  \cite[2.1]{AvI}.  The proof of Lemma \ref{l4} relies heavily on the ideas in Section \ref{sP}. 
\begin{lemma}
\label{l4}Let $M$ be an $R$-module.  Suppose that $f_i-f_i'\in I \ann_Q(M)$ for each $i$. 
For any strong $\Lambda$-resolution $F\xra{\simeq} M$, there is an isomorphism of DG 
$\pp\otimes_Q \Lambda$-modules 
 $$(\mathcal{D}\otimes_Q  F)^\delta \cong (\mathcal{D}\otimes_Q  F)^{\e},$$ where    $\delta=\sum_{i=1}^n \chi_i\otimes \lambda_i$ and  $\e=\sum_{i=1}^n \chi_i\otimes \lambda_i'$.
\end{lemma}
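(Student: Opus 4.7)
The plan is to apply Theorem \ref{t1} with $A := \pp \otimes_Q \Lambda$ acting on $X := \mathcal{D} \otimes_Q F$, using the perturbing system $\gamma_i := \chi_i \otimes 1$, $\alpha_i := 1 \otimes \lambda_i$, $\beta_i := 1 \otimes \lambda_i'$, so that $\sum_i \gamma_i \alpha_i = \delta$ and $\sum_i \gamma_i \beta_i = \e$. Each $\gamma_i$ is central because $A$ is graded-commutative, and conditions (2)--(4) of Definition \ref{pss} are routine: $\delta, \e$ are cycles because $f_i, f_i' \in I$ act as $0$ on the $R$-module $\mathcal{D}$; $\delta^2 = \e^2 = 0$ by the standard cancellation between even-commutative $\chi_i\chi_j$ and odd-anticommutative $\lambda_i\lambda_j$; and $\gamma^H$ eventually annihilates any fixed element of $X$ since $\mathcal{D}$ is concentrated in non-negative even degrees and each $\chi_i$ lowers degree by $2$.

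The real content is the construction of $\bm{\tau}$. Set
$$\eta_i := (\xi_i - \xi_i') - \sum_j x_{i,j}\, \zeta_{i,j} \in B.$$
Since $\partial \eta_i = (f_i - f_i') - \sum_j x_{i,j} g_{i,j} = 0$, each $\eta_i$ is a $1$-cycle in $B$. Expanding $\lambda_i - \lambda_i' = \lambda_{\eta_i} + \sum_j x_{i,j}\, \lambda_{\zeta_{i,j}}$ on $F$ and using that $x_{i,j} \in I$ annihilates $\mathcal{D}$ gives $\alpha_i - \beta_i = 1 \otimes \lambda_{\eta_i}$ on $X$. I will therefore produce a system $\bm{\sigma} = \{\sigma^{(H)}\} \subset \Hom_B(F, F)$ of higher strong homotopies from $(\lambda_{\eta_1}, \ldots, \lambda_{\eta_n})$ to the zero sequence, and then take $\tau^{(H)} := 1 \otimes \sigma^{(H)}$.

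The main obstacle is producing $\bm{\sigma}$; this is where the \emph{strong} $\Lambda$-resolution hypothesis is crucial. The $\lambda_{\eta_i}$ are odd-degree cycles in $\Hom_B(F, F)$ and are central since $B$ is graded-commutative, so Proposition \ref{p1} delivers $\bm{\sigma}$ provided (i) each $\lambda_{\eta_i}$ is null-homotopic and (ii) $\h_i(\Hom_B(F, F)) = 0$ for $i \geq 1$. Both assertions follow once we observe that $M$ is concentrated in degree $0$ and $F$ is bounded below, whence $\Hom_B(F, M)_d = \Hom_B(F_{-d}, M) = 0$ for $d \geq 1$, and that $B$-semiprojectivity of $F$ lifts this along the quasi-isomorphism $F \xra{\simeq} M$ to $\h_i(\Hom_B(F, F)) \cong \h_i(\Hom_B(F, M)) = 0$ for $i \geq 1$.

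Finally, the verification of Definition \ref{pss}(1) for $\tau^{(H)} := 1 \otimes \sigma^{(H)}$ is by direct computation: centrality of $\beta_i$ together with the evenness of $|\sigma^{(H_i)}|$ gives $\beta_i \tau^{(H_i)} = \tau^{(H_i)} \beta_i$, so $\sum_i \tau^{(H_i)} \alpha_i - \beta_i \tau^{(H_i)} = 1 \otimes \sum_i \sigma^{(H_i)}(\lambda_i - \lambda_i')$; again invoking that $x_{i,j}$ annihilates $\mathcal{D}$ collapses this to $1 \otimes \sum_i \sigma^{(H_i)} \lambda_{\eta_i}$, which equals $[\partial^X, \tau^{(H)}]$ by the defining property of $\bm{\sigma}$. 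Theorem \ref{t1} then produces the desired isomorphism $(\mathcal{D} \otimes_Q F)^\delta \xra{\cong} (\mathcal{D} \otimes_Q F)^{\e}$ of DG $\pp \otimes_Q \Lambda$-modules.
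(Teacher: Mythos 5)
Your proof is correct and follows essentially the same route as the paper's: produce a system of higher strong homotopies in $\Hom_B(F,F)$ using Proposition \ref{p1} (justified by the semiprojectivity of the strong $\Lambda$-resolution $F$, which gives $\Hom_B(F,F)\simeq\Hom_B(F,M)$ and hence vanishing of $\h_i$ for $i\geq 1$), tensor up to $\mathcal{D}\otimes_Q F$, and then apply Theorem \ref{t1}. Your repackaging via the explicit $1$-cycle $\eta_i=(\xi_i-\xi_i')-\sum_j x_{i,j}\zeta_{i,j}$ in $B$ and the comparison of $(\lambda_{\eta_1},\ldots,\lambda_{\eta_n})$ with the zero sequence is equivalent to the paper's comparison of $\tilde{\alpha}_i=\lambda_i-\sum_j x_{i,j}\sigma_{i,j}$ with $\tilde{\beta}_i=\lambda_i'$ (the condition \ref{stronghom}(3) collapses to the same identity after using centrality of $\tilde{\beta}_i$), and is arguably a cleaner way to present it; likewise working over $A=\pp\otimes_Q\Lambda$ instead of $\pp\otimes_Q B$ is a harmless simplification since the homotopies are in particular $\Lambda$-linear. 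One tiny slip worth flagging: you invoke only ``$F$ bounded below'' but then use $F_{-d}=0$ for $d\geq 1$; what you actually need (and what holds, and what the paper also asserts) is that the strong $\Lambda$-resolution is nonnegatively graded, which one arranges in the construction since $M$ sits in degree $0$.
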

\begin{proof}

We let $\sigma_{i,j}\in \Hom_B(F,F)_1$ denote left multiplication by $\zeta_{i,j}$. 
By assumption, $$[\del^F,\lambda_i-\lambda_i']=\left[\del^F, \sum_j x_{i,j}\sigma_{i,j}\right]$$  and hence,  $$\lambda_i-\lambda_i'-\sum_jx_{i,j}\sigma_{i,j}\in \zz_1(\Hom_B(F,F)).$$ 
As $F\xra{\simeq} M$ and $F$ is semiprojective over $B$, it follows that $$\Hom_B(F,F)\xra{\simeq} \Hom_B(F,M).$$  Since $F$ is nonnegatively graded and $M$ is concentrated in degree $0$,  $$\Hom_B(F,M)_1=0$$ and hence,  $$0=\h_1(\Hom_B(F,M))=\h_1(\Hom_B(F,F)).$$

Thus, $\zz_1(\Hom_B(F,F))=\bb_1(\Hom_B(F,F))$
and so there exists $\tau\in \Hom_B(F,F)_2$ such that $$[\del^F,\tau]=\lambda_i-\lambda_i'-\sum_jx_{i,j}\sigma_{i,j}.$$
Thus, $$\lambda_i-\sum_jx_{i,j}\sigma_{i,j}\sim \lambda_i'$$ for all $i$. Since  $\lambda_i,$ $\lambda_i'$ and $\sigma_i'$ are central on $F$ over $B$, we apply Proposition \ref{p1} to conclude that  
$$\tilde{\bm{\alpha}}\approx \tilde{\bm{\beta}}$$ where 
$$\tilde{\alpha}_i:=\lambda_i-\sum_jx_{i,j}\sigma_{i,j} \text{ and }\tilde{\beta}_i:=\lambda_i'$$
for each $1\leq i \leq n$. 
 Also,  as maps on $\mathcal{D}\otimes_Q F$,
$$1\otimes\lambda_i= 1\otimes\lambda_i-1\otimes \sum_jx_{i,j}\sigma_{i,j}$$ since each $x_{i,j}$ is in  $I$ which annihilates $\mathcal{D}$. Therefore, the sequences $(1\otimes \lambda_i)_{i=1}^n$ and $\tilde{\bm{\beta}}$ are  strongly homotopic maps of degree 1 on $\mathcal{D}\otimes_Q F$.

In the notation of Section \ref{sP}, set \begin{align*}
A&=\pp \otimes_Q B \\
X&= \mathcal{D}\otimes_Q F\\
\bm{\alpha}&=1\otimes \lambda_1,\ldots, 1\otimes \lambda_n\\ 
\bm{\beta}&=1\otimes \lambda_1',\ldots, 1\otimes \lambda_n'\\
\bm{\gamma}&=\chi_1\otimes 1,\ldots, \chi_n\otimes 1
\end{align*} and let $\bm{\tau}$ denote the strong homotopy from $\bm{\alpha}$ to $\bm{\beta}$ (the existence of $\bm{\tau}$ was justified above). 
It is easily checked that $(\bm{\alpha},\bm{\beta},\bm{\gamma},\bm{\tau})$ is a perturbing system on $X$ (see Definition \ref{pss}).  Therefore, Theorem \ref{t1} yields an isomorphism of DG $\pp\otimes_Q B$-modules  $$(\mathcal{D}\otimes_Q  F)^\delta \cong (\mathcal{D}\otimes_Q  F)^{\e},$$ where recall that  $\delta=\sum_{i=1}^n \chi_i\otimes \lambda_i$ and  $\e=\sum_{i=1}^n \chi_i\otimes \lambda_i'$. 
\end{proof}

We are now equipped to prove the main results of this paper. Theorem \ref{t2} generalizes \cite[2.1(2)]{AvI}, as discussed in the introduction.

\begin{theorem}\label{t2}
We adopt the notation from Notation \ref{n1}. Further assume that $\f$ and $\f'$ are Koszul-regular sequences. Let $\vp: \pp\to \pp'$ be the  morphism of DG $R$-algebras given by $$\chi_i\mapsto \chi_i'. $$
 Assume that $M$ is an $R$-module such that for each $i$ $$f_i-f_i'\in I \ann_Q(M).$$
For each complex of $R$-modules $N$,  we have the following $\vp$-equivariant isomorphisms:
\begin{enumerate}
\item $\Ext_{Q/(\f)}^*(M,N)\cong \Ext_{Q/(\f')}^*(M,N)$
\item $\Tor^{Q/(\f)}_*(M,N)\cong \Tor^{Q/(\f')}_*(M,N)$
\end{enumerate}
\end{theorem}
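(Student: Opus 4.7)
The plan is to assemble the previously developed machinery; the main content has already been concentrated in Lemma~\ref{l4}, so the remaining work is mostly bookkeeping of module structures.

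First, I would invoke \ref{c10} to pick a strong $\Lambda$-resolution $F \xra{\simeq} M$. Since $E$ and $E'$ are DG subalgebras of $\Lambda$ and $F^{\natural}$ is projective over $\Lambda^{\natural}$, $F$ is simultaneously a semiprojective resolution of $M$ over $E$ and over $E'$. The hypothesis that $\f$ and $\f'$ are Koszul-regular permits applying the universal resolution construction \ref{c5} to obtain semiprojective resolutions $U_E(F) \xra{\simeq} M$ over $Q/(\f)$ and $U_{E'}(F) \xra{\simeq} M$ over $Q/(\f')$, carrying the canonical $\Poly$- and $\Poly'$-actions on the resulting Ext and Tor.

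Next, I would translate both sides of (1) and (2) into models built from the common resolution $F$. Combining Remark~\ref{r1} with Lemma~\ref{l3}, together with the identification $R \otimes_{Q/(\f)} \Gamma \cong \mathcal{D}$ (both being the free graded $R$-module on $\{y^{(H)}\}_{H \in \N^n}$ with the standard $\pp$-action), there is a canonical isomorphism of complexes of $\pp$-modules
\[
R \otimes_{Q/(\f)} U_E(F) \cong (\mathcal{D} \otimes_Q F)^{\delta}, \qquad \delta=\sum_{i=1}^n \chi_i \otimes \lambda_i,
\]
and analogously of complexes of $\pp'$-modules
\[
R \otimes_{Q/(\f')} U_{E'}(F) \cong (\mathcal{D}' \otimes_Q F)^{\delta'}, \qquad \delta'=\sum_{i=1}^n \chi_i' \otimes \lambda_i'.
\]
The morphism $\vp$ sends $\chi_i \mapsto \chi_i'$; dualizing yields an isomorphism $\mathcal{D}' \cong \mathcal{D}$ under which $\delta'$ corresponds to $\e = \sum_{i=1}^n \chi_i \otimes \lambda_i'$. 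Thus, when $\Ext_{Q/(\f')}^*(M,N)$ is regarded as a $\pp$-module via restriction along $\vp$, it is computed by $\h(\Hom_R((\mathcal{D} \otimes_Q F)^{\e}, N))$.

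Finally, Lemma~\ref{l4} applied to $F$ delivers an isomorphism of DG $\pp \otimes_Q \Lambda$-modules $(\mathcal{D} \otimes_Q F)^\delta \cong (\mathcal{D} \otimes_Q F)^\e$. Restricting this along $\pp \hookrightarrow \pp \otimes_Q \Lambda$, applying $\Hom_R(-, N)$, and taking cohomology yields (1); applying $-\otimes_R N$ and taking homology (using the Tor-analog of Remark~\ref{r1}) yields (2). The only thing left to check is the $\vp$-equivariance, which becomes automatic once the $\pp$-action on the $\f'$-side has been transported along $\vp$ as above. The main conceptual obstacle was already overcome inside Lemma~\ref{l4} via the perturbation machinery of Section~\ref{sP}; what remains here is purely the identification of the Ext/Tor models with perturbed DG modules and the translation between $\pp$ and $\pp'$ via $\vp$.
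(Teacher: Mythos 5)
Your proof is correct and follows essentially the same route as the paper's: you pick a strong $\Lambda$-resolution $F$, identify $R\otimes_{Q/(\f)}U_E(F)$ with $(\mathcal{D}\otimes_Q F)^\delta$ and $R\otimes_{Q/(\f')}U_{E'}(F)$ with $(\mathcal{D}'\otimes_Q F)^{\delta'}$ via Lemma~\ref{l3}, transport the latter to $(\mathcal{D}\otimes_Q F)^{\e}$ via $\Hom_R(\vp,R)$, invoke Lemma~\ref{l4} for the perturbation isomorphism, and then apply $\Hom_R(-,N)$ (respectively $-\otimes_R N$) and pass to (co)homology using Remark~\ref{r1}. The only difference from the paper is a cosmetic reordering of when the $\vp$-dualization is performed relative to the invocation of Lemma~\ref{l4}; the underlying isomorphisms and equivariance bookkeeping are the same.
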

\begin{proof}
The proof of (2) is similar to the proof of (1). So we only prove (1) and leave  translating the proof of (1) to the context of (2) as an  exercise for the reader.

By \ref{c10} and Lemma \ref{l4}, there exists a semiprojective resolution $F\xra{\simeq} M$ over $\Lambda$ and a canonical isomorphism of DG $\pp\otimes_Q \Lambda$-modules 
 \begin{equation}\label{eq10}(\mathcal{D}\otimes_Q  F)^\delta \cong (\mathcal{D}\otimes_Q  F)^{\e},\end{equation} where    $\delta=\sum_{i=1}^n \chi_i\otimes \lambda_i$ and  $\e=\sum_{i=1}^n \chi_i\otimes \lambda_i'$. Moreover, $\vp$ induces the $\vp\otimes \Lambda$-equivariant isomorphism  \begin{equation}\label{eq11}(\mathcal{D}'\otimes_Q  F)^{\delta'}\xra{\Hom_R(\vp,R)\otimes \Lambda} (\mathcal{D}\otimes_Q  F)^{\e} \end{equation} where $\delta'=\sum_{i=1}^n \chi_i'\otimes \lambda_i'.$ Composing the isomorphisms in (\ref{eq10}) and (\ref{eq11}), we obtain a $\vp\otimes \Lambda$-equivariant isomorphism  \begin{equation}\label{eq12}(\mathcal{D}\otimes_Q  F)^\delta\cong    (\mathcal{D}'\otimes_Q  F)^{\delta'}.\end{equation}

Using that $\f$ and $\f'$ are Koszul-regular and  Lemma \ref{l3}, we  obtain  the following isomorphism of DG  $\pp\otimes_Q \Lambda$-modules and  isomorphism of DG $\pp'\otimes_Q \Lambda$-modules
$$
R\otimes_{Q/(\f)} U_{E}(F) \cong  (\mathcal{D}\otimes_Q  F)^\delta \ \text{ and }  \ 
R\otimes_{Q'/(\f')} U_{E'}(F)  \cong  (\mathcal{D}'\otimes_Q  F)^{\delta'},$$
 respectively. Therefore, we have a  $\vp\otimes \Lambda$-equivariant isomorphism $$R\otimes_{Q/(\f)} U_{E}(F){\cong }R\otimes_{Q/(\f')} U_{E'}(F)$$ using these isomorphisms and  (\ref{eq12}). 
 Applying $\Hom_R(-,N)$ providues us with  a $\vp\otimes \Lambda$-equivariant isomorphism $$\Hom_R(R\otimes_{Q/(\f)} U_{E}(F), N)\cong \Hom_R(R\otimes_{Q/(\f')} U_{E'}(F), N).$$ Now using (\ref{iso1}) and(\ref{iso2}) from Remark \ref{r1},  we obtain (1). \end{proof}

%First we prove part (1). As $\Lambda$ is semiprojective over $E$ and $E'$, it follows that $F\xra{\simeq} M$ is a semiprojective resolution over $E$ and $E'$. 
%Since $\f$ is Koszul-regular, $U_{E}(F)\xra{\simeq}M$ is a semiprojective resolution of $M$ over  $\ov{Q}$ (see \ref{c5}). Hence, $$\hh(\Hom_{\ov{Q}}(U_E(F), N))=\Ext_{\ov{Q}}^*(M,N)$$ and similarly, $$\hh(\Hom_{\ov{Q}'}(U_{E'}(F), N))=\Ext_{\ov{Q}'}^*(M,N).$$
%Since $N$ is a complex of $R$-modules, we have  isomorphisms of DG $\pp\otimes_Q\Lambda$-modules \begin{align*}\Hom_{\ov{Q}}(U_{E}(F), N)&\cong \Hom_R(R\otimes_{\ov{Q}} U_{E}(F),N) \\
%\Hom_{\ov{Q}'}(U_{E'}(F), N)&\cong \Hom_R(R\otimes_{\ov{Q}'} U_{E'}(F),N) \\
%\end{align*} 
%Therefore, it suffices to show that we have an isomorphism of DG $\pp\otimes_Q \Lambda$-modules \begin{equation}\label{crucialiso}R\otimes_{\ov{Q}} U_{E}(F)\cong R\otimes_{\ov{Q}'} U_{E'}(F).\end{equation}
% By Lemma \ref{l3}, we have isomorphisms of DG  $\pp\otimes_Q \Lambda$-modules
%\begin{align}\label{eq5}
%R\otimes_{\ov{Q}} U_{E}(F)& \cong  (\mathcal{D}\otimes_Q  F)^\delta  \\
%R\otimes_{\ov{Q'}} U_{E'}(F) & \cong  (\mathcal{D}\otimes_Q  F)^{\delta'}. \label{eq6}
%\end{align} Also, by (\ref{eq10}), we have a canonical isomorphism of DG $\pp\otimes_Q \Lambda$-modules 
 %$$(\mathcal{D}\otimes_Q  F)^\delta \cong (\mathcal{D}\otimes_Q  F)^{\delta'}.$$ Combining these with 
 %(\ref{eq5}) and (\ref{eq6}) we obtain $(\ref{crucialiso})$ which finishes the proof of (1). 

\begin{chunk}\label{basspoin}
Let $Q$ be local and $M$ a $Q$-module. We let $\nu(M)$ denote the minimal number of generators of $M$. For a complex of $Q$-modules $N$,    the \emph{Bass series of $M$ and $N$} is 
$$\Bass_{Q}^{M,N}(t)=\sum_{i=0}^\infty \nu(\Ext_Q^i(M,N))t^i.$$  When $M=k$, this recovers the classical Bass series of $N$. 

The \emph{Poincar\'{e} series of $M$ and $N$} is $$\Poin^Q_{M,N}(t)=\sum_{i=0}^\infty \nu(\Tor^Q_i(M,N))t^i.$$ When $M=k$, this recovers the classical Poincar\'{e} series of $N$. 
\end{chunk}

From Theorem \ref{t2} we get the following immediate  corollary. 
\begin{corollary}\label{cor1}
 Further assume that $Q$ is local. 
For each complex of $R$-modules $N$,  the following hold:
\begin{enumerate}
\item $\Bass_{Q/(\f)}^{M,N}(t)=\Bass_{Q/(\f')}^{M,N}(t)$
\item $\Poin^{Q/(\f)}_{M,N}(t)=\Poin^{Q/(\f')}_{M,N}(t)$
\end{enumerate}

\end{corollary}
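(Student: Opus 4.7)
The plan is to deduce both equalities directly from Theorem~\ref{t2}. Since $Q$ is local with maximal ideal $\n$ and $I$ is a proper ideal of $Q$ (as $\f$ is a $Q$-regular sequence contained in $I$, so $I\neq Q$), the quotient $R=Q/I$ is itself a local ring. In particular, $\nu(-)$ is a well-defined isomorphism invariant of $R$-modules: for any $R$-module $X$, $\nu(X)$ equals $\dim_k(X\otimes_R k)$ where $k$ is the residue field of $R$, and this is plainly preserved under $R$-module isomorphism.

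Next I would invoke Theorem~\ref{t2}, which supplies isomorphisms of graded $R$-modules
$$\Ext_{Q/(\f)}^{*}(M,N)\cong \Ext_{Q/(\f')}^{*}(M,N) \quad \text{and} \quad \Tor^{Q/(\f)}_{*}(M,N)\cong \Tor^{Q/(\f')}_{*}(M,N).$$
Since these isomorphisms are $R$-linear and degree-preserving, for each $i\geq 0$ they restrict to isomorphisms of $R$-modules between $\Ext_{Q/(\f)}^{i}(M,N)$ and $\Ext_{Q/(\f')}^{i}(M,N)$, and between $\Tor^{Q/(\f)}_{i}(M,N)$ and $\Tor^{Q/(\f')}_{i}(M,N)$. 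Applying $\nu$ to each side yields equality of the corresponding coefficients in the formal power series of \ref{basspoin}, giving (1) and (2).

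There is no real obstacle here; this corollary is a numerical shadow of Theorem~\ref{t2}. The only point worth emphasizing is that it is essential that the isomorphisms in Theorem~\ref{t2} are of graded $R$-modules, rather than merely of graded $Q/(\f)$- or $Q/(\f')$-modules via restriction of scalars. This common $R$-module structure on both sides is precisely what lets us compare $\nu$ degree by degree over a single local ring and extract the coefficient-wise identity of the two series.
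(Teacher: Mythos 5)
Your proof is correct and matches the paper's approach: the paper simply notes that the corollary is ``immediate'' from Theorem~\ref{t2}, and you have spelled out exactly why --- the graded $R$-module isomorphisms restrict to degree-wise $R$-module isomorphisms, and $\nu$ is an isomorphism invariant over the local ring $R$, so the coefficients of the two series agree term by term. The only tiny slip is your parenthetical justification that $I\neq Q$ ``as $\f$ is a $Q$-regular sequence'': Theorem~\ref{t2} only assumes $\f$ is Koszul-regular, not $Q$-regular, so a priori $I$ could be all of $Q$ --- but in that degenerate case $R=0$ and both series vanish, so the statement holds trivially and your argument is unaffected.
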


Our last main theorem of this article is given below in Theorem \ref{t9}. The techniques from Section \ref{sP} are again applied to establish this result. Finally, Corollary \ref{corr} is an immediate consequence of Theorem \ref{t9}. 
\begin{theorem}\label{t9}
We adopt the notation from Notation \ref{n1}. If $(\f)\con I \ann_Q(M)$ is a Koszul-regular sequence, we  have isomorphisms of graded $\pp$-modules:
\begin{enumerate}
\item $\Ext_{Q/(\f)}^*(M,N)\cong \pp\otimes_R \Ext_Q^*(M,N) $  
\item $\Tor^{Q/(\f)}_*(M,N)\cong \pp^*\otimes_R\Tor^{Q}_*(M,N)$ 
\end{enumerate}
\end{theorem}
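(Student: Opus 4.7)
The plan is to adapt the strategy of Theorem \ref{t2}, replacing the source sequence $\f'$ by the zero sequence: the central step is to invoke Theorem \ref{t1} to cancel the perturbation $\delta = \sum_i \chi_i \otimes \lambda_i$ entirely, producing a DG isomorphism $(\mathcal{D} \otimes_Q F)^\delta \cong \mathcal{D} \otimes_Q F$ for a suitable resolution $F$ of $M$. Using the hypothesis $(\f) \subseteq I\ann_Q(M)$, I first write $f_i = \sum_j x_{i,j} g_{i,j}$ with $x_{i,j} \in I$ and $g_{i,j} \in \ann_Q(M)$, and form the semifree DG $E$-algebra $B := E\langle \zeta_{i,j} \mid \del \zeta_{i,j} = g_{i,j}\rangle$. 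Since both $\f$ and the $g_{i,j}$ annihilate $M$, the module $M$ is a DG $B$-module, and I take a bounded-below semiprojective resolution $F \to M$ over $B$ (as in \ref{c10}). Such an $F$ is automatically semiprojective over $E$ and over $Q$, so $U_E(F)$ computes both $\Ext^*_{Q/(\f)}(M,N)$ and $\Tor^{Q/(\f)}_*(M,N)$ via Remark \ref{r1}.

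Following the template of Lemma \ref{l4}, let $\lambda_i$ and $\sigma_{i,j}$ denote left multiplication on $F$ by $\xi_i$ and $\zeta_{i,j}$; both are central in $\Hom_B(F,F)$. The identity $[\del^F, \lambda_i] = f_i\,\id_F = [\del^F, \sum_j x_{i,j}\sigma_{i,j}]$ exhibits $\lambda_i - \sum_j x_{i,j}\sigma_{i,j}$ as a $1$-cycle in $\Hom_B(F,F)$, while $F$ being bounded below and $F \to M$ being a semiprojective resolution over $B$ forces $\h_i(\Hom_B(F,F)) = 0$ for $i \geq 1$. Hence $\lambda_i \sim \sum_j x_{i,j}\sigma_{i,j}$, and Proposition \ref{p1} promotes this to a system $\bm{\tau}$ of higher strong homotopies between these central sequences on $F$. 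Passing to $X := \mathcal{D} \otimes_Q F$, the maps $\sum_j x_{i,j}\sigma_{i,j}$ collapse to zero because $I$ annihilates $\mathcal{D}$, so $1 \otimes \bm{\tau}$ becomes a strong homotopy from $(1 \otimes \lambda_i)_i$ to $0$ on $X$. Taking $\bm{\gamma} := (\chi_i \otimes 1)_i$, $\delta := \sum_i \chi_i \otimes \lambda_i$, and $\e := 0$, I claim the quadruple $(1 \otimes \bm{\lambda},\, 0,\, \bm{\gamma},\, 1 \otimes \bm{\tau})$ is a perturbing system on $X$ over $\pp \otimes_Q B$: condition (3) of Definition \ref{pss}, $\delta^2 = 0$, is a consequence of Lemma \ref{l3} since $(\Gamma \otimes_Q F)^\delta = U_E(F)$ is already a DG module, and condition (4) holds because $\mathcal{D}$ is locally nilpotent under the $\pp$-action. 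Theorem \ref{t1} then produces the desired DG $\pp \otimes_Q B$-isomorphism $(\mathcal{D} \otimes_Q F)^\delta \cong \mathcal{D} \otimes_Q F$, and combining this with Lemma \ref{l3} gives a $\pp$-equivariant DG isomorphism $R \otimes_{Q/(\f)} U_E(F) \cong \mathcal{D} \otimes_Q F$.

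With this isomorphism in hand, (2) follows by applying $-\otimes_R N$: flatness of $\mathcal{D}$ over $R$ lets cohomology commute with $\mathcal{D} \otimes_R -$, yielding $\Tor^{Q/(\f)}_*(M,N) \cong \mathcal{D} \otimes_R \Tor^Q_*(M,N) = \pp^* \otimes_R \Tor^Q_*(M,N)$. For (1), I apply $\Hom_R(-, N)$, use the adjunction $\Hom_R(\mathcal{D} \otimes_Q F, N) \cong \Hom_Q(F, \Hom_R(\mathcal{D}, N))$, and identify $\Hom_R(\mathcal{D}, N) \cong \pp \otimes_R N$ degree-wise (valid since each $\pp^{2k}$ is finite free over $R$ with $\mathcal{D}^{-2k}$ its graded $R$-dual); taking cohomology then yields $\Ext^*_{Q/(\f)}(M,N) \cong \pp \otimes_R \Ext^*_Q(M,N)$. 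The main obstacle is assembling the perturbing system in the second paragraph, since constructing the higher strong homotopies $\bm{\tau}$ on $F$ over $B$ and transferring them compatibly to $X$ is exactly where the machinery of Section \ref{sP} earns its keep; once that is in place, the remaining identifications are routine graded-module bookkeeping.
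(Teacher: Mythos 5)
Your proof is correct and follows the same overall strategy as the paper: specialize the perturbation machinery of Section~\ref{sP} to cancel $\delta = \sum_i \chi_i \otimes \lambda_i$, then unwind Hom/tensor adjunctions. There is one point where your version is actually cleaner than what the paper writes. The paper invokes Lemma~\ref{l4} with $\f' = 0$ for a strong $\Lambda$-resolution $F$ over $\Lambda = E \otimes_Q E'$, and asserts the conclusion $(\mathcal{D} \otimes_Q F)^\delta \cong \mathcal{D} \otimes_Q F$. But taken literally, Lemma~\ref{l4} produces an isomorphism to $(\mathcal{D} \otimes_Q F)^{\e}$ with $\e = \sum_i \chi_i \otimes \lambda_i'$, where $\lambda_i'$ is left multiplication by $\xi_i'$; when $\f' = 0$ the generators $\xi_i'$ have zero differential but still act nontrivially on a semiprojective $\Lambda$-module, so $\e$ is not automatically zero. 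You sidestep this by discarding $E'$ entirely and resolving $M$ over $B := E\langle \zeta_{i,j} \mid \del\zeta_{i,j} = g_{i,j}\rangle$, so the target perturbation is literally $0$; this is the intended reading and it is good that you made it explicit. Your handling of the perturbing-system axioms (checking $\delta$ is a cycle because $f_i$ kills $\mathcal{D}$, $\delta^2 = 0$ from Lemma~\ref{l3}, local nilpotence of the $\chi_i$-action on $\mathcal{D}$) is sound, and your explicit argument for part~(2) via flatness of the degreewise-free $R$-module $\mathcal{D}$ fills in what the paper leaves to the reader. In part~(1), the passage from $\h\bigl(\Hom_Q(F, \pp \otimes_R N)\bigr)$ to $\pp \otimes_R \h\bigl(\Hom_Q(F,N)\bigr)$ deserves one more sentence—one should note that $\pp$ is degreewise finite free over $R$, so $\pp \otimes_R -$ commutes both with $\Hom_Q(F,-)$ degreewise and with cohomology—but this is exactly the same step the paper performs, so it is not a substantive gap.
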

\begin{proof}

The proof of (1) and (2) are similar, and hence, we  only show (1). 

Let $f_i'=0$ for all $1\leq i \leq n$ and 
fix a strong $\Lambda$-resolution $F\xra{\simeq} M$. By Lemma \ref{l4}, we have an isomorphism of DG $\pp\otimes_Q \Lambda$-modules 
 $$(\mathcal{D}\otimes_Q  F)^\delta \cong \mathcal{D}\otimes_Q F.$$

Therefore,  we have the following isomorphisms of graded $\pp$-modules 
\begin{align*}
\Ext_{Q/(\f)}^*(M,N)&=\hh(\Hom_R((\mathcal{D}\otimes_Q  F)^\delta,N)) \\
&\cong \hh(\Hom_R( \mathcal{D}\otimes_Q F,N)) \\
&\cong \hh(\Hom_R(\mathcal{D},R)\otimes_Q \Hom_Q(F,N)) \\
&\cong \hh(\pp\otimes_R \Hom_Q(F,N))\\
&\cong \pp \otimes_R \hh(\Hom_Q(F,N)).
\end{align*} As $F$ is semiprojective over $E$, and $E$ is semiprojective over $Q$, it follows that $F\xra{\simeq}M$ is a semiprojective resolution of $M$ over $Q$. Thus, (3) holds by the isomorphisms exhibited. 
\end{proof}

\begin{corollary}\label{corr}
 Further assume that $Q$ is local. 
\begin{enumerate}
\item $\displaystyle \Bass_{Q/(\f)}^{M,N}(t)=\dfrac{\Bass_{Q}^{M,N}(t)}{(1-t^2)^n}.$
\item $\displaystyle \Poin^{Q/(\f)}_{M,N}(t)=\dfrac{\Poin^{Q}_{M,N}(t)}{(1-t^2)^n}.$
\end{enumerate}
\end{corollary}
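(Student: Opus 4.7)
The plan is to deduce the corollary directly from Theorem \ref{t9} via a short Hilbert series computation. Since $Q$ is local, so are $R = Q/I$ and $Q/(\f)$, and the invariant $\nu$ is additive on direct sums by Nakayama's lemma. Consequently, for any graded free $R$-module $F^{*}$ with Hilbert series $H_F(t) = \sum_i \rank_R(F^i)\, t^i$ and any graded $R$-module $C^{*}$, one has the identity
\[
\sum_i \nu\bigl((F \otimes_R C)^i\bigr)\, t^i \;=\; H_F(t)\cdot \sum_i \nu(C^i)\, t^i.
\]

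Next I would compute the Hilbert series of $\pp$ and $\pp^{*}$ as graded free $R$-modules. Because $\pp = R[\chi_1,\ldots,\chi_n]$ is a polynomial ring on $n$ generators of cohomological degree $2$, the cohomological piece $\pp^{2k}$ is $R$-free of rank $\binom{k+n-1}{n-1}$, yielding the Hilbert series $1/(1-t^2)^n$. For the graded dual, the $R$-basis $\{y^{(H)}\}_{H \in \N^n}$ of $\pp^{*}$ sits in homological degree $2|H|$, so $\pp^{*}$ is homologically graded free with the same Hilbert series $1/(1-t^2)^n$.

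For part (1), Theorem \ref{t9}(1) provides an isomorphism $\Ext_{Q/(\f)}^{*}(M,N) \cong \pp \otimes_R \Ext_Q^{*}(M,N)$ of graded $\pp$-modules, and in particular of cohomologically graded $R$-modules. Applying the displayed identity with $F^{*} = \pp$ and $C^{*} = \Ext_Q^{*}(M,N)$ immediately produces
\[
\Bass_{Q/(\f)}^{M,N}(t) \;=\; \frac{\Bass_Q^{M,N}(t)}{(1-t^2)^n}.
\]
Part (2) proceeds in exactly the same fashion, invoking Theorem \ref{t9}(2) with $F^{*} = \pp^{*}$ and $C^{*} = \Tor_{*}^{Q}(M,N)$, both in the homological grading.

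There is no genuine obstacle here; the argument is essentially bookkeeping. The only point deserving attention is the interplay between the cohomological grading of $\pp$ (where $\chi_i$ lives in degree $+2$) and the homological grading of $\pp^{*}$ (where the dual basis element of $\chi_i$ again lives in degree $+2$), which explains why the denominators $(1-t^2)^n$ appearing in (1) and (2) are identical despite the opposite variances of $\Ext$ and $\Tor$.
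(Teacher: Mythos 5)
Your proposal is correct and is precisely the bookkeeping that the paper leaves implicit when it states the corollary is ``an immediate consequence of Theorem~\ref{t9}'': deduce the series identity from the graded isomorphisms of Theorem~\ref{t9} by additivity of $\nu$ over the local ring, together with the Hilbert series $1/(1-t^2)^n$ of $\pp$ (and its graded dual $\pp^*$) as a graded free $R$-module. The only point worth double-checking, which you handle correctly, is that the minimal number of generators is the same whether computed over $Q$, $R$, or $Q/(\f)$, since these local rings share the same residue field.
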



\begin{thebibliography}{99}


%\bibitem{ACI}
%L.~L.~Avramov,
%\textit{Flat morphisms of complete intersections},
% Soviet Math. Dokl. \textbf{16}  (1975), 1413--1417


%\bibitem{Flat}
 %L.~L.~Avramov, \textit{Homology of local flat extensions and complete intersection defects}, Math.
%Ann. \textbf{228} (1977), 27--37.

\bibitem{IFR}
L.~L.~Avramov,
\textit{Infinite free resolutions},
 Six Lectures on Commutative Algebra (Bellaterra, 1996), Progr. Math. \textbf{166}, Birkh\"{a}user, Basel, 1998; pp. 1--118

\bibitem{VPD}
 L.~L.~Avramov, \emph{Modules of finite virtual projective dimension}, Inventiones mathematicae \textbf{96} (1989),
71--101

\bibitem{CD2}
L.~L.~Avramov, R.-O.~Buchweitz,
\textit{Homological algebra modulo a regular sequence with special attention to codimension two},
Journal of Algebra \textbf{230} (2000), 24--67

\bibitem{SV}
L.~L.~Avramov, R.-O.~Buchweitz,
\textit{Support varieties and cohomology over complete intersections},
Inventiones mathematicae \textbf{142} (2000), 285--318.

%\bibitem{HPC}
%L.~L.~Avramov, R.-O.~Buchweitz,  S.~Iyengar, C.~Miller, \textit{Homology of perfect complexes}, Advances in Mathematics \textbf{223}  (2010), 1731--1781


\bibitem{AFH}
L.~L.~Avramov, H.~B.~Foxby, S.~Halperin,
\textit{Differential graded homological algebra},
preprint



%\bibitem{AG}
%L.~L.~Avramov, D.~R.~Grayson,
%\textit{Resolutions and cohomology over complete intersections},
%Computations in algebraic geometry with Macaulay 2 (2002), 131--178.


%\bibitem{AGP}
%L.~L.~Avramov, V.~N.~Gasharov, I.~V.~Peeva,
%\textit{Complete intersection dimension},
%Publ. Math. I.H.E.S. \textbf{86} (1998), 67--114.

%\bibitem{AI}L.~L.~Avramov, S.~Iyengar,
%\textit{Constructing modules with prescribed cohomological support}, 
%Illinois Journal of Mathematics \textbf{51.1} (2007), 1--20


\bibitem{AvI}
L.~L.~Avramov, S.~Iyengar, \textit{Restricting homology to hypersurfaces},
PIMS Summer School and Workshop, Springer (2016), 1--23


%\bibitem{AINS}
%L.~L.~Avramov, S.~Iyengar, S.~Nasseh,
% S.~Sather-Wagstaff,\emph{Homology over trivial
%extensions of commutative dg algebras},
 %(2015),
%arMiv:1508.00748v1 [math.AC]

%\bibitem{AJ}
%L.~Avramov and D.~Jorgensen, \textit{Reverse homological algebra over some local rings} preprint (2007).

%\bibitem{AKM}
%L.~L.~Avramov, A.~R. Kustin and M.~Miller 
%\textit{Poincar\`{e} series of modules over local rings of small embedding codepth or small linking number},
%Journal of Algebra \textbf{118} (1988): 162-204.

%\bibitem{AS}
%L.~L.~Avramov, L.-C.~Sun,
%\textit{Cohomology operators defined by a deformation},
%Journal of Algebra \textbf{204} (1998), 684--710.
%\bibitem{BM}
%H.~Bass,  M.~P.~Murthy,
%\emph{Grothendieck groups and Picard groups of abelian group rings}, Ann. of Math. (2) \textbf{86} (1967), 16--73.


%\bibitem{Berg}
%P.~A.~Bergh, \textit{On complexes of finite complete intersection dimension}, Homology, Homotopy, and Applications \textbf{11.2} (2009), 49--54


%\bibitem{BH}
%W.~Bruns and J.~Herzog, \emph{Cohen-Macaulay rings}, volume 39 of Cambridge Studies in Advanced Mathematics. Cambridge University Press, Cambridge, 1993

%\bibitem{B} P.~Bergh \textit{On support varieties for modules over complete intersections}, Proceedings of the American Mathematical Society \textbf{135.12} (2007), 3795-3803.

%\bibitem{BE}
%D.~A.~Buchsbaum, D.~Eisenbud,
%\textit{Some structure theorems for finite free resolutions},
 %Advances in mathematics \textbf{12} (1974): 84-139.

%\bibitem{MS}
%R.-0.~Buchweitz, C.~Roberts,
%\textit{The multiplicative structure on Hochschild cohomology of a complete intersection},
%Journsel of Pure and Applied Algebra \textbf{219} (2015), 402-428

%\bibitem{CAR} J.~F.~Carlson,
%\textit{The varieties and the cohomology ring of a module.} J. Algebra
%\textbf{85}
%(1983),
%104--143.

%\bibitem{DS}
%H.~Dao, W.~Sanders,
%\textit{Cohomological support and the geometric join},
% arMiv preprint arMiv:1602.00429 (2016).

%\bibitem{DGI2}
%W.~G.~Dwyer, J.~P.~C.~Greenlees,  S.~Iyengar, \emph{Duality in algebra and topology}, Advances in Mathematics \textbf{200.2} (2006): 357--402

%\bibitem{DGI}
%W.~G.~Dwyer, J.~P.~C.~Greenlees,  S.~Iyengar, \textit{Finiteness in derived categories of local rings}, Commentarii Mathematici Helvetici \textbf{81.2} (2006), 383--432
%\bibitem{Eis}
%D.~Eisenbud,
%\textit{Homological algebra on a complete intersection, with an application to group representations},
%Trans. Amer. Math. Soc. \textbf{260} (1980),\! 35--64.



\bibitem{Eis}
D.~Eisenbud,
\textit{Homological algebra on a complete intersection, with an application to group representations},
Trans. Amer. Math. Soc. \textbf{260} (1980), 35--64.


\bibitem{FHT}
Y.~Felix, S.~Halperin, J.~C. Thomas, \emph{Rational homotopy theory}, Graduate Texts Math. \textbf{205}, Springer-Verlag, New York, 2001


%\bibitem{FIJ}
%A.~Frankild, S.~Iyengar, P.~J{\o}rgensen,
%\textit{Dualizing Differential Graded Modules and Gorenstein Differential Graded Algebras}, 
%J London Math Soc \textbf{68} (2003), 288-306



%\bibitem{GOR}
%A.~Franklid, P.~J{\o}rgensen,
%\textit{Gorenstein differential graded algebras},
%Israel Journal of Mathematics \textbf{135} (2003), 327-353

%\bibitem{HOM}
%A.~Franklid, P.~J{\o}rgensen,
%\textit{Homological identities for differential graded algebras},
%Journal of Algebra \textbf{265} (2003), 114-135


\bibitem{G}
T.~H.~Gulliksen,
\textit{A change of rings theorem, with applications to Poincar\'{e} and intersection multiplicity},
Math. Scand. \textbf{34} (1974),\! 167--183.

%\bibitem{GL}
%T.~H.~Gulliksen, G.~Levin, 
%\textit{Homology of Local Rings}, Queen’s Papers Pure Appl. Math. \textbf{20},
%Queen’s Univ., Kingston, ON, 1969


%\bibitem{H}
%M.~Hopkins, \textit{Global methods in homotopy theory, in: Homotopy theory} (Durham,
%1985), London Math. Soc. Lect. Note Ser. \textbf{117}, Cambridge Univ. Press, Cambridge,
%(1987), 73--96


\bibitem{Jor}
D.~A.~Jorgensen, 
\textit{Support sets of pairs of modules},
Pacific journal of mathematics \textbf{207} (2002), 393--409.


%\bibitem{PJor}
%P.~J{\o}rgensen,
%\textit{Amplitude inequalities for differential graded modules}, 
%Forum Mathematicum \textbf{22} (2010), 941--948

%\bibitem{gullisken}
%T.~H.~Gullisken, G.~Levin
%\textit{Homology of local rings},
%Queen's Papers Pure Appl. Math. \textbf{20}, Queen's Univ., Kingston, On, 1969.


%\bibitem{Keller}
%B.~Keller,  \emph{Deriving DG categories}, Ann. Sci.\'{E}cole Norm. Sup.(4), \textbf{27} (1994), 63-102

%\bibitem{QUI}D.~Quillen,
%\textit{The spectrum of an equivariant cohomology ring. I, II}, Ann. of Math.
%\textbf{94}
%(1971),
%549-572; ibid.
%\textbf{94}
%(1971), 573-602.

%\bibitem{N} A.~Neeman,\emph{The chromatic tower of D(R)}, Topology \textbf{31}
%(1992), 519--532.


%\bibitem{Pol}
%J.~Pollitz, \emph{Support varieties over Koszul complexes}, (in preparation)


%\bibitem{Roberts}
%P. ~Roberts,
%\emph{Le th\'{e}or\`{e}me d'intersection}, C. R. Acad. Sci. Paris S\'{e}r. I Math.
%\textbf{304}
%(1987), 177--180.


%\bibitem{Tate}
%J.~Tate, \emph{Homology of Noetherian rings and local rings}, Illinois Journal of Mathematics \textbf{1} (1957), 14--27

\end{thebibliography}
\end{document}